\definecolor{PineGreen}{rgb}{0.0,0.47,0.44}
\definecolor{MidnightBlue}{rgb}{0.1,0.1,0.44}
\definecolor{magenta}{rgb}{1.0,0.0,1.0}
\definecolor{bl1}{HTML}{4479A1}
\definecolor{pur1}{HTML}{52196D}
\definecolor{mag1}{HTML}{2AD0F1}
\definecolor{org1}{rgb}{.92,.39.21}
\definecolor{pur2}{rgb}{.53,.47,.7}
\definecolor{myblue}{RGB}{72,127,227}
\definecolor{mygreen}{RGB}{48,142,48}
\newcommand{\spc}{\hspace*{0.25in}}
\newcommand{\eqnum}{\refstepcounter{equation}\textup{\tagform@{\theequation}}}
\newtheorem{theorem}{Theorem}
\numberwithin{theorem}{section}
\newtheorem{proposition}[theorem]{Proposition}
\newtheorem*{theorem*}{Theorem}
\newtheorem{lemma}[theorem]{Lemma}
\newtheorem{corollary}[theorem]{Corollary}
\theoremstyle{definition}
\newtheorem{definition}[theorem]{Definition}
\theoremstyle{remark}
\newtheorem{remark}[theorem]{Remark}
\newtheorem{example}[theorem]{Example}
\newcommand{\Con}{\mathbf{Con}}
\newcommand{\polar}{\text{\rm P}}
\newcommand{\RR}{\mathbb{R}}
\newcommand{\QQ}{\mathbb{Q}}
\newcommand{\PP}{\mathbb{P}}
\newcommand{\pp}{\mathbb{P}}
\newcommand{\CC}{\mathbb{C}}
\newcommand{\bF}{\mathbf{F}}
\newcommand{\Jel}{\mathbf{Jel}}
\newcommand{\bB}{\mathbf{B}}
\newcommand{\bV}{\mathbf{V}}
\newcommand{\ideal}[1]{\left\langle #1 \right\rangle}
\newcommand{\cP}{\mathscr{P}}
\newcommand{\bS}{\mathbf{S}}
\newcommand{\set}[1]{{\left\{{#1}\right\}}}
\newcommand{\ip}[1]{\left\langle{#1}\right\rangle}
\newcommand{\B}{\mathbf{R}}
\newcommand{\jac}{\text{\bf J}}
\DeclareRobustCommand
\newcommand{\inj}{\hookrightarrow}
\newcommand{\surj}{\twoheadrightarrow}
\def\DD{D\kern-.7em\raise0.3ex\hbox{\char '55}\kern.33em}
\definecolor{Ftitle}{RGB}{11,46,108}
\colorlet{tableheadcolor}{Ftitle!25} 
\colorlet{tablerowcolor}{gray!10} 
\newcommand{\Pure}{\text{\rm Pure}}
\begin{document}
	
	\title{Effective Whitney Stratification of Real Algebraic  Varieties}
	\author{Martin Helmer} 
	\address[MH]{
		Department of Mathematics, Swansea University,
		Swansea, Wales, UK}\email{martin.helmer@swansea.ac.uk}
  \author{Anton Leykin}\address[AL]{School of Mathematics, Georgia Institute of Technology, Atlanta, USA}\email{leykin@math.gatech.edu}
	\author{Vidit Nanda}\address[VN]{Mathematical Institute,
		University of Oxford, Oxford, UK}\email{nanda@maths.ox.ac.uk}
\begin{abstract}
We describe new algorithms to compute Whitney stratifications of real algebraic varieties. Using either conormal or polar techniques, these algorithms stratify a complexification of a given real variety. We then show that the resulting stratification can be described by real polynomials. We also extend these methods to stratification problems involving the so-called full semialgebraic sets as well as real algebraic maps. 
\end{abstract}
    	
     \maketitle

\section{Introduction}
Real varieties arise more naturally in applications than their complex counterparts. Unfortunately, the geometry of a real variety is not readily revealed by its defining polynomials: for instance, every real variety is the vanishing locus of a single polynomial, whereas the minimal number of defining polynomials forms an important (and non-trivial) invariant for complex varieties. As such, one often seeks methods to subdivide a given variety into simpler regions which fit together coherently. And indeed, cylindrical algebraic decomposition \cite{arnon1984cylindrical,brown2007complexity,caviness2012quantifier,england2016experience,wilson2012speeding} provides precisely such a subdivision into contractible cells. One of our goals in this work is to outline what we view as a key step in an alternative framework  for studying broad classes of applied and computational problems in real algebraic geometry.

\subsection*{Whitney Stratifications} There are many advantages to obtaining a cell decomposition of a variety --- it becomes possible, for instance, to directly compute the Euler characteristic, and (with a bit more work) the Betti numbers. On the other hand, cells, being homeomorphic images of open balls, are rather rigid objects. Cylindrical algebraic decomposition tends to be impractical on larger (or more complicated) varieties precisely because one often has to perform many subdivisions in order to obtain cells; similarly, the number of these cells can also be doubly exponential relative to the number of variables \cite{brown2007complexity}. A more general and flexible paradigm is furnished by the stratifications introduced by Whitney \cite{whitney1965tangents}, which play an essential role in microlocal geometry and stratified Morse theory \cite{SMTbook,kashiwara-schapira1}. 

\begin{definition} A pair $(M,N)$ of smooth submanifolds of $\RR^n$ satisfies Whitney's {\bf Condition (B)} if the following property holds at every point $q \in N$. Given any pair of sequences $\set{p_k} \subset M$ and $\set{q_k} \subset N$ with $\lim p_k = q = \lim q_k$, if the limiting tangent space and the limiting secant line 
\[
T := \lim_{k \to \infty} T_{p_k}M \quad \text{and} \quad \ell := \lim_{k \to \infty}[p_k,q_k]
\]
both exist, then $\ell \subset T$. 
\end{definition}

A {Whitney stratification} of a subset $X \subset \mathbb{R}^n$ is any locally-finite decomposition 
\[
X = \coprod_{\alpha}M_\alpha
\] into smooth, connected nonempty manifolds $M_\alpha \subset X$ called {\em strata}, so that every pair $(M_\alpha,M_\beta)$ satisfies Condition (B). It is known that every (real or complex) algebraic variety $X$ admits a Whitney stratification such that for each dimension $i$ the union $X_i$ of all strata of dimension $\leq i$ forms a subvariety (see e.g.~\cite[page 36--38]{wall2006regular}). One may therefore identify any such stratification of a $k$-dimensional variety $X$ with a flag 
\begin{equation}\label{eq:WhitneyFlag}
    X_\bullet = \big(\varnothing \subset X_0 \subset X_1 \subset \cdots\subset  X_{k-1} \subset X_k = X\big)
\end{equation}
of subvarieties, with the implicit understanding that its $i$-strata are connected components of $X_i - X_{i-1}$.

\begin{figure}[h!]
    \includegraphics[scale=0.4]{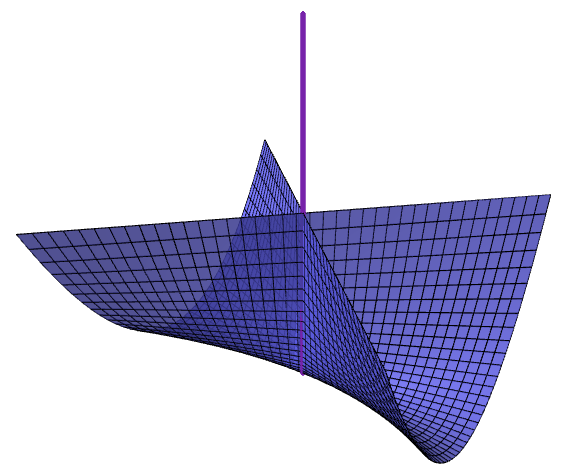} \caption{\small A depiction of the Whitney Umbrella, which is commonly used to illustrate equisingularity.
    \label{fig:stratSurf}}
\end{figure}

One key consequence of imposing Condition (B) on strata pairs is {\em equisingularity}: a sufficiently small tubular neighborhood around a given stratum forms a (locally trivial) stratified fiber bundle over that stratum \cite[Cor 10.6]{Mather2012}. As a consequence, small neighborhoods in $X$ of two points lying in the same stratum $M_\alpha$ are stratified homeomorphic\footnote{A homeomorphism $\phi:X \to Y$ of stratified spaces is said to be stratified whenever it maps each stratum $S \subset X$ diffeomorphically onto a stratum $\phi(S) \subset Y$.} to each other. Consider, for instance, the (real) {\em Whitney Umbrella} from Figure \ref{fig:stratSurf}. This is the variety $X \subset \RR^3$ defined by $x^2-y^2z=0$, whose singular locus is the (positive) $Z$-axis (depicted as a purple line in the figure). If we take a small neighbourhood in $X$ of a point $(0,0,z)$ lying on the $Z$-axis, then its topology changes depending on whether $z$ is positive, negative or zero. Thus, the origin must constitute a stratum separate from the rest of the $Z$-axis in any valid Whitney stratification of $X$.  

\subsection*{This Paper} The main contribution of our work here is a suite of practical algorithms for constructing Whitney stratifications of real algebraic varieties and related objects. Our prior work \cite{hn}\footnote{The Arxiv version \cite{hn} cited here incorporates both the original paper \cite{hnFOCM} and its subsequent correction \cite{hnFOCMCorrection} in one document.} describes an effective algorithm {\bf WhitStrat} for constructing Whitney stratifications of complex varieties; this algorithm relies on primary decompositions of conormal fibers (and hence, on Gr\"obner basis computation). Here we introduce a new algorithm {\bf WhitneyPolar} for computing complex stratifications; it is based on the theory of polar varieties \cite{FTpolar} and exhibits better complexity and runtimes than {\bf WhitStrat}.

In the introductory remarks to \cite{hn}, we highlighted the lack of Gr\"obner basis techniques over $\RR$ as a primary obstacle to performing similar stratifications for real algebraic varieties and semialgebraic sets. We overcome this obstacle here by examining, for any real variety $X \subset \mathbb{R}^n$, an associated complex variety $X(\CC)$ --- this is obtained by treating (any set of) the real polynomials which define $X$ as elements of the ring $\CC[x_1,\ldots,x_n]$. Having constructed $X(\CC)$, we obtain an effective Whitney stratification of $X$ via the following two steps:   
\begin{enumerate}
    \item we show in Theorem \ref{thm:RealWhitStrat} that if $X_\bullet(\CC)$ is any Whitney stratification of $X(\CC)$ for which each $X_i(\CC)$ is generated by real polynomials, then the corresponding real varieties $X_i$ form a valid Whitney stratification of $X$; and moreover,
    \item we show in Corollary \ref{cor:whitstratworks} that if a complex variety $Y \subset \CC^n$ is generated by real polynomials, then running either {\bf WhitStrat} or {\bf WhitneyPolar} on $Y$ produces a Whitney stratification $Y_\bullet$ for which each $Y_i$ is also generated by real polynomials.
\end{enumerate}
These two results open the door for stratifying real algebraic varieties via effective complex techniques. 
Unlike other efforts in this direction (see for instance \cite{alberti2009isotopic} which stratifies hypersurfaces in $\RR^3$), the algorithm described here works in general for all dimensions. Moreover, the same techniques also readily adapt to the more general setting of {\em full semialgebraic sets} (see Definition \ref{def:full}).

Here we also investigate stratifications of algebraic maps $f:X \to Y$ between real varieties. This amounts to a pair of Whitney stratifications $X_\bullet$ of $X$ and $Y_\bullet$ of $Y$ such that $f$ sends each stratum of $X_\bullet$ submersively to a stratum of $Y_\bullet$. If $f$ is a {\em proper map} (this always holds if $X$ is compact, for instance), then it forms a {\em stratified fiber bundle} with respect to $X_\bullet$ and $Y_\bullet$. In other words, the restriction $f^{-1}(N) \surj N$ is an ordinary fiber bundle for each stratum $N \subset Y$. As a consequence, the homeomorphism type of the fiber $f^{-1}(y)$ depends only on the stratum $N$ and is independent of the choice of point $y \in N$. To compute such a stratification of $f$, we once again proceed by first treating it as a complex algebraic map $f_\CC:X(\CC) \to Y(\CC)$ and then using algorithms from \cite{hn}. We then show in Theorem \ref{thm:realMapsStrat} that the resulting stratifications of $X(\CC)$ and $Y(\CC)$ consist of varieties defined by real polynomials, and that the corresponding real varieties furnish the desired stratification of $f$. 

 Unfortunately, the requirement that $f:X \to Y$ be proper fails quite often in cases of interest for general algebraic maps. One exception to this rule is when $f$ is a {\em dominant morphism} between varieties of the same dimension (see Definition \ref{def:dominant}). For such a morphism $f$, we are able to partially recreate the stratified fiber bundle property in Theorem \ref{thm:dominantstrat} by carefully analysing and decomposing the locus of points at which $f$ fails to be proper. 

\subsection*{Roadmap} 
The rest of this paper is organised as follows.
In Section \ref{sec:complex} we summarise relevant results and algorithms from \cite{hn} for stratifying complex varieties. The only new material here lies in Section \ref{subsec:StratMapsVaritieyCC}, where we are able to remove a genericity hypothesis for map stratifications. Section \ref{sec:polarStrat} introduces a new {\bf WhitneyPolar} algorithm for complex Whitney stratifications which is based on polar varieties. In Section \ref{sec:realvarietystrat} we show how both our old and new complex stratification algorithms may be used to stratify real varieties and algebraic morphisms between them. Our task in Section \ref{sec:semialgstrat} is to similarly stratify basic closed semialgebraic sets whose inequality loci are open subsets of the ambient space --- these often arise in practical contexts owing to boundedness or positivity constraints. Finally, in Section \ref{sec:performance}, we examine the 
experimental performance of {\bf WhitneyPolar}.

\subsection*{Acknowledgements}
{\footnotesize The material in this article is based upon work supported by the Air Force Office of Scientific Research (AFOSR) under award
number FA9550-22-1-0462, managed by Dr.~Frederick Leve;  both MH and VN were partially supported by this award and would like to gratefully acknowledge this support. MH is also supported by the Royal Society under grant RSWF\textbackslash R2\textbackslash 242006 and would like to gratefully acknowledge this support.
AL is grateful to be partially supported by NSF DMS award 2001267 and the Simons Fellows program.
Finally, we thank the two anonymous referees whose suggestions greatly improved our paper.}
 
\section{Whitney Stratifications from Conormal Ideals}\label{sec:complex}

In this section we begin with a brief review of the stratification algorithms for complex varieties and their morphisms. Section \ref{subsec:stratificationReview} and Section \ref{subsec:flagStrat} are a review of the results of \cite{hn}, restated in a form better suited to the current presentation. On the other hand, Section \ref{subsec:StratMapsVaritieyCC} is mostly new and makes a key generalization of the map stratification algorithm of \cite[Section 6]{hn}; it allows us to drop the genericity requirement from \cite[Definition 6.3]{hn}, so that the Thom-Boardman flag of the given polynomial map $f:X\to Y$ need not intersect $X$ transversely\footnote{See \cite[Chapter VI]{golubitsky} for a detailed treatment of Thom-Boardman flags.}. Instead, we consider arbitrary polynomial maps $f:X\to Y$ between complex affine varieties $X\subset \CC^n$ and $Y\subset \CC^m$, and derive (in Lemma \ref{lemma:TBX}) an expression for the locus of smooth points of $X$ where the rank of $f$'s Jacobian drops below its generic value. 

A central role in our generalisation is played by a new type of stratification, which we have called {\em Whitney-Thom-Boardman} stratifications of $X$ along $f$ (see Definition \ref{def:WTBStrat}). Here the decomposition of $X$ into strata is well behaved with respect to the Thom-Boardman singularities of $f$. 

\subsection{Stratification of Complex Varieties}\label{subsec:stratificationReview}
Consider a pure $k$-dimensional complex variety, that is a variety all of whose irreducible components are of dimension $k$, $X=\bV_\CC(I_X)\subset \CC^n$ defined by a radical ideal $I_X=\langle f_1, \dots, f_r\rangle$ in  $\CC[x_1,\dots, x_n]$. Let $X_{\rm reg}$ denote the (open, dense) manifold of smooth points in $X$ and let $X_{\rm sing}=X-X_{\rm reg}$ be the singular locus of $X$; recall $X_{\rm sing}$ is a closed proper subvariety of $X$, see e.g.~\cite[Section 6.1]{smith2004invitation} or ~\cite[page 26--27]{michalek2021invitation}.  The {\em conormal variety} of $X$ is the subvariety of $X
 \times \pp^{n-1}$ given by 
 \begin{align}\label{eq:conormal}
{\rm\bf Con}(X)=\overline{\left\lbrace  (p,\xi)\;|\; p \in X_{\rm reg} \text{ and } T_pX_{\rm reg} \subset\xi^\perp \right\rbrace}.
\end{align} In other words, ${\bf Con}(X)$ is obtained by closing the set of pairs $(p,\xi)$ consisting of points $p \in X_{\rm reg}$ and $\xi \in \pp^{n-1}$ such that the inner product $\ip{\xi,v}$ is zero for all $v \in T_pX_{\rm reg}$. We note the later coordinate is taken to be projective as we identify all constant multiples of a particular unit direction vector defining the normal to a plane. The conormal variety comes endowed with a canonical projection map $\kappa_X:{\rm\bf Con}(X)\to X$ induced by the coordinate projection $\CC^n\times \PP^{n-1}\to \CC^n$. Work in the ring $\CC[x_1,\dots,x_n][\xi_1,\dots, \xi_n]$ and set $$
\mathscr{K}=\begin{bmatrix}
\xi_1& \cdots & \xi_n\\
	\frac{\partial f_1}{\partial x_1} &\cdots& \frac{\partial f_1}{\partial x_n}\\
	\vdots& \ddots & \vdots \\
	\frac{\partial f_m}{\partial x_1} &\cdots& \frac{\partial f_m}{\partial x_n}\\
	\end{bmatrix} \quad \text{ and }
 \quad \mathscr{J}=\begin{bmatrix}
	\frac{\partial f_1}{\partial x_1} &\cdots& \frac{\partial f_1}{\partial x_n}\\
	\vdots& \ddots & \vdots \\
	\frac{\partial f_m}{\partial x_1} &\cdots& \frac{\partial f_m}{\partial x_n}\\
	\end{bmatrix}.
$$ The conormal variety ${\rm\bf Con}(X)=\bV_\CC(I_{{\rm\bf Con}(X)})$ is defined by the radical ideal obtained via the saturation $I_{{\rm\bf Con}(X)}=(I_X+K):J^\infty$ --- here $K$ is the ideal generated by all $(n-k+1)\times (n-k+1)$ minors of the matrix $\mathscr{K}$, while $J$ is similarly defined by the $(n-k)\times (n-k)$ minors of the matrix $\mathscr{J}$. Note that the singular locus $X_{\rm sing}$ is precisely $\bV_\CC(J) \cap X$. A definition of ideal saturation and a discussion of how it is computed may be found in books such as~\cite[\S4.4]{CLO}. The following result \cite[Theorem 3.1]{hn} forms the basis for efficient Whitney stratification of a given variety $X$. Recall that given an ideal $I$ its associated primes are the radicals of the ideals appearing in a minimal primary decomposition of $I$, see books such as \cite{CLO,michalek2021invitation} for definitions and see \cite{decker1999primary} for a survey of methods for the computation of associated primes of an ideal via Gr\"obner basis.

\begin{theorem} Let $X \subset \CC^n$ be a pure dimensional variety, let $Y\subset X_{\rm sing}=(X-X_{\rm reg})$ be a nonempty irreducible subvariety defined by a radical ideal $I_Y$, and set $I_{\kappa^{-1}_X(Y)}:=I_{\Con(X)}+I_Y$. 
Let $\{P_1,\dots, P_s\}$ be the associated primes of { $
I_{{\kappa_X^{-1}(Y)}},
$}
let { $\sigma \subset \set{1,2,\ldots,s}$} be the set of indices $i$ with {$\dim \kappa_X(\bV_\CC(P_i)) < \dim Y$} and let 
$$A := \left[\bigcup_{i \in \sigma}\kappa_X(\bV_\CC(P_i))\right] \cup Y_{\rm sing}.$$
Then the pair {$(X_\text{\rm reg},Y-A)$ satisfies Condition (B)}. \label{thm:WhitB_Primary_Decomp}
\end{theorem}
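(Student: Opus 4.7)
The plan is to interpret Whitney's Condition (B) at a point $q \in Y$ in terms of the conormal variety and its irreducible components lying over $Y$, and then identify $A$ precisely as the locus where this interpretation breaks down.

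First I would reformulate Condition (B) conormally. A sequence $p_k \to q$ in $X_{\mathrm{reg}}$ with $T_{p_k} X_{\mathrm{reg}} \to T$ lifts, after passing to a subsequence, to a sequence $(p_k,\xi_k) \in \Con(X)$ whose limit $(q,\xi)$ satisfies $T = \xi^\perp$. Since $q \in Y$, this limit lies in $\kappa_X^{-1}(Y) = \Con(X) \cap (Y \times \pp^{n-1})$. Condition (B) at $q \in Y_{\mathrm{reg}}$ therefore becomes the assertion that $\langle \xi, v\rangle = 0$ for every $v \in T_q Y$ and every limiting conormal $\xi$ above $q$. The key classical input, which I would cite from the Whitney--Thom--Teissier theory (in particular, the material underlying \cite{le1983cycles}), is that this orthogonality is equivalent to equidimensionality of the projection $\kappa_X \colon \kappa_X^{-1}(Y) \to Y$ near $q$: Condition (B) holds at $q$ precisely when every irreducible component of $\kappa_X^{-1}(Y)$ meeting $\kappa_X^{-1}(q)$ has $\kappa_X$-image of full dimension in $Y$.

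Next I would translate this criterion into the associated-prime framework. By construction, $\kappa_X^{-1}(Y)$ is cut out by $I_{\kappa_X^{-1}(Y)} = I_{\Con(X)} + I_Y$, so its irreducible components are exactly the subvarieties $\bV_\CC(P_i)$ for $i = 1,\ldots,s$. A component $\bV_\CC(P_i)$ violates equidimensionality over $Y$ precisely when $\dim \kappa_X(\bV_\CC(P_i)) < \dim Y$, i.e.\ when $i \in \sigma$. Thus the points of $Y$ at which $\kappa_X^{-1}(Y)\to Y$ fails to be equidimensional are exactly the points of $\bigcup_{i \in \sigma}\kappa_X(\bV_\CC(P_i))$; adjoining $Y_{\mathrm{sing}}$ (where $T_q Y$ is not even well defined) recovers the set $A$ in the statement.

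For any $q \in Y - A$, we have $q \in Y_{\mathrm{reg}}$ and $q \notin \kappa_X(\bV_\CC(P_i))$ for every $i \in \sigma$, so every component of $\kappa_X^{-1}(Y)$ meeting $\kappa_X^{-1}(q)$ dominates $Y$. The classical characterization then forces $\langle \xi, v\rangle = 0$ for every limiting conormal $\xi$ and every $v \in T_q Y$, which is Condition (B). The main technical obstacle is the dominance-to-orthogonality implication inside that classical characterization: one must show that if $\bV_\CC(P_i)$ dominates $Y$ through $(q,\xi)$, then for any prescribed $v \in T_q Y$ one can produce an analytic arc in $\bV_\CC(P_i)$ projecting under $\kappa_X$ to a curve in $Y$ tangent to $v$, and then propagate the defining conormal orthogonality along this arc to conclude $\langle \xi, v\rangle = 0$ in the limit. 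This requires a curve-selection argument together with the interpretation of points of $\kappa_X^{-1}(Y)$ as limits of conormal directions taken at smooth points of $X$.
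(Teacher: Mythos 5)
Your overall strategy --- reformulating the Whitney condition conormally, identifying bad components of $\kappa_X^{-1}(Y)$ via associated primes, and appealing to the L\^e--Teissier theory --- matches the approach taken in \cite{hn}. However, there is a genuine gap in the reformulation step: what you state as ``Condition (B) becomes the assertion that $\langle \xi, v\rangle = 0$ for every $v \in T_q Y$ and every limiting conormal $\xi$'' is actually Whitney's Condition (A). Condition (B) concerns the limiting secant line $\ell = \lim [p_k, q_k]$, which is a separate object from $T_qY$; over $\CC$, (A) does not imply (B) for a single pair of strata, and your argument never re-introduces the secant line. The curve-selection sketch at the end likewise only addresses a dominance-to-(A) implication and says nothing about where the limiting secant direction goes. (Also, a small imprecision: for non-hypersurface $X$ one has $T \subset \xi^\perp$, not $T = \xi^\perp$, so a single covector $\xi$ does not determine $T$.)

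Separately, the central equivalence you invoke --- ``equidimensionality of $\kappa_X^{-1}(Y) \to Y$ near $q$'' if and only if ``Condition (B) at $q$'' --- is not an interpretation but a substantial theorem in the theory of polar varieties (one route goes through Verdier's $(w)$-condition, equivalent to (B) over $\CC$, and the equimultiplicity of relative polar varieties along $Y$). Citing this precisely is exactly what makes the argument work: with such a theorem in hand, the remainder of your proposal (primary decomposition of $I_{\kappa_X^{-1}(Y)}$, identifying $\sigma$, forming $A$, and concluding at points of $Y - A$) is correct and mirrors the paper's argument. As written, though, the key step is asserted rather than proved, and the ``main technical obstacle'' you flag at the end is precisely where the real mathematics lives, so the proposal remains an outline with the hard part missing.
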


We first describe a subroutine, called {\bf Decompose}, which implements this theorem for a given subvariety $Y \subset X_\text{\rm sing}$. In particular, it computes the dimension of the elimination ideal arising from each primary component of $I_{\kappa_X^{-1}(Y)}$, and records those which have dimension less than $Y$. In effect, this subroutine explicitly constructs subvarieties of $Y$ (not necessarily contained in $Y_\text{\rm sing}$ where Condition (B) fails with respect to $X$.

\medskip
	\begin{center}
		\begin{tabular}{|r|l|}
			\hline
			~ & {\bf Decompose}$(Y,X )$ \\
			\hline
			~&{\bf Input:} Algebraic varieties $Y \subset X$ in $\CC^n$, with $d:=\dim Y$.\\
			~&{\bf Output:} A list of subvarieties $Y_\bullet$ of $Y$. \\
			\hline
			1 & {\bf Set} $Y_\bullet := (Y_d,Y_{d-1},\ldots,Y_0) := (Y,\varnothing,\ldots,\varnothing)$ \\
			2 &  {\bf Set} $J:=I_{\Con(X)}+I_{Y}$ \\
			3 & {\bf For each} primary component $Q$ of a primary decomposition of $J$\\
			4 & \spc {\bf Set} $K := Q\cap \CC[x]$ \\
			5 & \spc {\bf If}  $\dim \bV_\CC(K) < \dim Y$   \\
			6 & \spc \spc{\bf Add} $\bV_\CC(K)$ to $Y_{ \geq\dim \bV_\CC(K)}$\\
			7 & {\bf Return} $Y_\bullet$ \\
			\hline
		\end{tabular} 
	\end{center}\medskip

It is shown in \cite{hn} that the {\bf WhitStrat} algorithm below correctly computes the Whitney stratification of the variety $X$, and is guaranteed to terminate in finitely many steps. 

\medskip
	\begin{center}
		\begin{tabular}{|r|l|}
			\hline
			~ & {\bf WhitStrat}$(X)$ \\
			\hline
			~&{\bf Input:} A pure $k$-dimensional variety $X\subset \CC^n$.\\
			~&{\bf Output:} A list of subvarieties $X_\bullet$ of $X$. \\
			\hline
			1 & {\bf Set} $X_\bullet := (X_k,X_{k-1},\ldots,X_0) := (X,\varnothing,\ldots,\varnothing)$ \\
			2 & {\bf Compute} $X_{\rm sing}$ and $\mu := \dim(X_{\rm sing})$ \\
			3 & {\bf For each} irreducible component $Z$ of $X_{\rm sing}$  \\
			4 & \spc {\bf Add} $Z$ to $X_{\geq \dim Z}$ \\
			5 & {\bf For each} $d$ in $(\mu, \mu-1,\ldots,1,0)$ \\
			6 & \spc {\bf Set }$X_\bullet := {\bf Merge} (X_\bullet,{\bf Decompose}(\Pure_d(X_d),X))$ \\
			7 & \spc {\bf Set }$X_\bullet := {\bf Merge} (X_\bullet,{\bf WhitStrat}(\Pure_d({X_d})))$\\
			8 & {\bf Return} $X_\bullet$ \\
			\hline
		\end{tabular}
	\end{center}
\medskip

 \noindent The {\bf WhitStrat} function makes use of two elementary subroutines besides {\bf Decompose}; these are invoked in lines 6 and 7: 
 \begin{enumerate} 
 \item $\Pure_d$ extracts the purely $d$-dimensional irreducible components of the input variety; this is accomplished via prime decomposition \cite{decker1999primary,eisenbud1992direct}.
 \item Let $V_\bullet$ and $W_\bullet$ be nested sequences of subvarieties of a common variety, with the length $p$ of $V_\bullet$ larger than the length $q$ of $W_\bullet$. Then {\bf Merge}$(V_\bullet,W_\bullet)$ creates a new sequence $U_\bullet$ of length $p$ via $U_i := V_i \cup W_{\max(i,q)}$. 
 \end{enumerate}
 
\subsection{Flag-Subordinate Stratifications}\label{subsec:flagStrat}

By a {\em flag} $\bF_\bullet$ on a variety $X\subset \CC^n$ we mean any finite nested set of subvarieties of the form
	\[
	\varnothing = \bF_{-1}X \subset \bF_0X \subset \bF_1X \subset \cdots \subset \bF_{\ell-1}X \subset \bF_\ell X = X.
	\] The integer $\ell$  is called the {\em length} of  $\bF_\bullet$. There are no additional restrictions on the dimensions of the individual $\bF_iX$ and, in particular, we do not require successive differences $\bF_iX - \bF_{i-1}X$ to be smooth manifolds or to satisfy Condition (B).
	
	\begin{definition}\label{def:flagsubstrat}
		Let $X \subset \CC^n$ be an affine variety and $\bF_\bullet$ a flag on $X$ of length $\ell$. A Whitney stratification $X_\bullet$ of $X$ is {\bf subordinate} to $\bF_\bullet$ if for each stratum $S \subset X$ of $X_\bullet$ there exists some $j = j(S)$ in $\set{0,\ldots,\ell}$ satisfying $S \subset (\bF_jX - \bF_{j-1}X)$.
	\end{definition}
 Our main motivation for considering flag subordinate stratification is to develop algorithms to stratify algebraic maps between varieties. It is shown in \cite[\S5]{hn} that a flag subordinate stratification of a complex algebraic variety can always be  computed using the {\bf WhitStratFlag} algorithm described below. There are two main subroutines, the first of which is called {\bf InducedFlag}. This takes as input a subvariety $W \subset X$ and a flag $\bF_\bullet$ on $X$, and outputs the restriction of $\bF_\bullet$ to $W$.

\medskip

	\begin{center}
		\begin{tabular}{|r|l|}
			\hline
			~ & {\bf InducedFlag}$(W,\bF_\bullet)$ \\
			\hline
			~&{\bf Input:} A subvariety $W \subset X\subset \CC^n$ and a flag $\bF_\bullet$ on $X$ of length $\ell$.\\
			~&{\bf Output:} A flag $\bF'_\bullet$ on $W$ of length $\ell$. \\
			\hline
			1 & {\bf Set} $\bF'_\bullet W := (\bF'_\ell W, \ldots, \bF'_0W) := (\varnothing,\ldots,\varnothing)$\\
			2 & {\bf For each} irreducible component $V$ of $W$  \\
			3 & \spc {\bf Add} $V$ to $\bF'_{i}W$ {\bf for all} $\bF'_{i}$ where $V\subset \bF'_{i}$\\
			4 & \spc {\bf For each} $j$ with $\dim(\bF_jX \cap V) < \dim V$ \\
			5 & \spc \spc {\bf Add} $V_j := (\bF_jX \cap V)$ to $\bF'_{i} W$ {\bf for all} $\bF'_{i}$ where $V_j\subset \bF'_{i}$ \\
			6 & {\bf Return} $\bF'_\bullet W$\\
			\hline
		\end{tabular}
	\end{center}
\medskip

The second subroutine is called {\bf DecomposeFlag}; this is a variant of the ${\bf Decompose}$ subroutine described in the preceding subsection. The only difference is that rather than merging the detected subvariety $\bV_\CC(K)$ with $Y_\bullet$ directly, we first use {\bf InducedFlag} to restrict $\bF_\bullet$ to $\bV_\CC(K)$ and then merge the output with $Y_\bullet$. 

\medskip
 \begin{center}
		\begin{tabular}{|r|l|}
			\hline
			~ & {\bf DecomposeFlag}$(Y,X,\bF_\bullet)$ \\
			\hline
			~&{\bf Input:} Varieties $Y \subset X\subset \CC^n$ with $d:=\dim Y$ and a flag $\bF_\bullet$ on $X$.\\
			~&{\bf Output:} A list of subvarieties $Y_\bullet \subset Y$. \\
			\hline
			1 & {\bf Set} $Y_\bullet := (Y_d,Y_{d-1},\ldots,Y_0) := (\varnothing,\ldots,\varnothing)$ \\
			2 &  {\bf Set} $J:=I_{\Con(X)}+I_{Y}$ \\
			3 & {\bf For each} primary component $Q$ of a primary decomposition of $J$\\
			4 & \spc {\bf Set} $K := Q\cap \CC[x]$ \\
			5 & \spc {\bf If}  $\dim \bV_\CC(K) < \dim Y$   \\
			6 & \spc \spc {\bf Merge} $Y_\bullet$ with {\bf InducedFlag}$(\bV_\CC(K),\bF_\bullet X)$\\
			7 & {\bf Return} $Y_\bullet$ \\
			\hline
		\end{tabular}
	\end{center}

 \medskip

Finally, here is the promised {\bf WhitStratFlag} algorithm which makes use of these two subroutines. This takes as input a variety $X$ and a flag $\bF_\bullet$ defined on $X$. The output $X_\bullet$ is guraranteed to be a Whitney stratification subordinate to $\bF_\bullet$ in the sense of Definition \ref{def:flagsubstrat} --- see \cite[Sec 5]{hn} for details.

\begin{center}
		\begin{tabular}{|r|l|}
			\hline
			~ & {\bf WhitStratFlag}$(X,\bF_\bullet)$ \\
			\hline
			~&{\bf Input:} A pure $k$-dimensional variety $X\subset \CC^n$ and a flag $F_\bullet$ on $X$.\\
			~&{\bf Output:} A list of subvarieties $X_\bullet \subset X$. \\
			\hline
			1 & {\bf Set} $X_\bullet := (X_k,X_{k-1},\ldots,X_0) := (X,\varnothing,\ldots,\varnothing)$ \\
			2 & {\bf Compute} $X_{\rm sing}$ and $\mu := \dim(X_{\rm sing})$ \\
			3 & {\bf Set} $X_d = X_{\rm sing}$ for all $d$ in $\set{\mu,\mu+1,\ldots,k-1}$\\
			4 & {\bf Merge} $X_\bullet$ with {\bf InducedFlag}$(X_{\rm sing},\bF_\bullet X)$\\
			5 & {\bf For each} $d$ in $(\mu, \mu-1,\ldots,1,0)$ \\
			6 & \spc {\bf Merge} $X_\bullet$ with ${\bf DecomposeFlag}(X_d,X,\bF_\bullet)$ \\
			7 & \spc {\bf Merge} $X_\bullet$ with ${\bf WhitStratFlag}(X_d,\bF_\bullet)$\\
			8 & {\bf Return} $X_\bullet$ \\
			\hline
		\end{tabular}
	\end{center}
\medskip

\subsection{Stratifying Complex Algebraic Morphisms}\label{subsec:StratMapsVaritieyCC}

Maps of Whitney stratified spaces are typically required to satisfy additional criteria beyond smoothly sending strata to strata --- see \cite[Def 3.5.1]{brasseletSeadeSuwa} or \cite[Part I, Ch 1.7]{SMTbook} for instance. 

\begin{definition}\label{def:stratmap3}
 Let $\mathscr{X}_\bullet$ and $\mathscr{Y}_\bullet$ be Whitney stratifications of topological spaces $\mathscr{X}$ and $\mathscr{Y}$. A continuous function $\phi:\mathscr{X} \to \mathscr{Y}$ is {\bf stratified} with respect to $\mathscr{X}_\bullet$ and $\mathscr{Y}_\bullet$ if for each stratum $M \subset \mathscr{X}$ there exists a stratum $N \subset \mathscr{Y}$ satisfying two requirements: 
		\begin{enumerate}
			\item the image $\phi(M)$ is wholly contained in $N$; and moreover,
		\item the restricted map $\phi|_M:M \to N$ is a smooth submersion.\footnote{Explicitly, its derivative $d(\phi|_S)_x:T_xM \to T_{\phi(x)}N$ is surjective at each point $x$ in $M$.}
		\end{enumerate}
  The pair $(\mathscr{X}_\bullet,\mathscr{Y}_\bullet)$ is called a stratification of $\phi$.
	\end{definition}
 
\begin{remark} The second requirement of Definition \ref{def:stratmap3} ensures the following crucial property via Thom's first isotopy lemma \cite[Prop 11.1]{Mather2012}. If $\phi$ is a {\em proper map} -- namely, if the inverse image of every compact subset of $Y$ is compact in $X$ -- then for every stratum $N \subset Y$, the restriction of $\phi$ forms a locally trivial fiber bundle from $\phi^{-1}(N)$ to $N$. \label{remark:fibration}
\end{remark}

Fix varieties $X \subset \CC^n$ and $Y \subset \CC^m$ along with a polynomial map $f:\CC^n \to \CC^m$ that sends points of $X$ to points of $Y$. In \cite[Section 6]{hn}, we described an effective algorithm for stratifying any $f:X \to Y$ whose {\em Thom-Boardman flag}\footnote{See \cite[Chapter VI]{golubitsky}.} intersects $X$ transversely. Here we will describe a new modification which circumvents this transversality requirement. The first step towards this goal is the following result. 

\begin{lemma}\label{lemma:TBX}
 Let $G := \set{g_1,g_2,\ldots,g_s}$ be any set of polynomials defining $X \subset \CC^n$ which generate a radical ideal. Consider, at each $x \in X$, the matrix of partial derivatives
 \[
 \mathbf{M}_G(x) :=	\begin{bmatrix}
{\partial g_1}/{\partial x_1} &\cdots& {\partial g_s}/{\partial x_1}\\
 \vdots& \ddots & \vdots  \\
	{\partial g_1}/{\partial x_n} &\cdots& {\partial g_s}/{\partial x_n}\\
	\end{bmatrix}
\]  
evaluated at $x$. For each $j$ in $\set{0,1,\ldots,\dim X}$, let $V_j = V_j(X;f)$ be the subvariety of $\CC^n$ generated by all $j \times j$ minors of the augmented matrix $\mathbf{A}(x) := \begin{bmatrix} \mathbf{M}_G(x) & \jac f(x)^\text{\sf T}\end{bmatrix}$. Writing $f^*:X_\text{\rm reg} \to \CC^m$ for the restriction of $f$ to the smooth points of $X$, we have
\[
\text{\rm rank } \jac f^*(x) \leq i \quad \text{ if and only if } \quad x \in V_{(n-\dim X) + (i+1)},
\]
for all $i$ in $\set{0,1,\ldots,\min(\dim X,m)}$.
\end{lemma}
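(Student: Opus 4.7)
The plan is to reduce the rank condition on $\jac f^*(x)$ to a rank condition on the augmented matrix $\mathbf{A}(x)$, and then translate that into the vanishing of appropriate minors. Concretely, the key identity to establish is
\[
\text{rank } \mathbf{A}(x) \;=\; (n-\dim X) \;+\; \text{rank } \jac f^*(x) \qquad \text{for every } x \in X_\text{reg}.
\]
Once this is in hand, the lemma follows immediately: by definition $x \in V_j$ exactly when $\text{rank } \mathbf{A}(x) \leq j-1$, so membership in $V_{(n-\dim X)+(i+1)}$ is equivalent to $\text{rank } \mathbf{A}(x) \leq (n-\dim X) + i$, which by the identity is equivalent to $\text{rank } \jac f^*(x) \leq i$.

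To prove the identity I would use two ingredients. Since $G$ generates a radical ideal cutting out the (pure-dimensional) variety $X$, the Jacobian criterion tells us that at any $x \in X_\text{reg}$ the matrix $\mathbf{M}_G(x)$ has rank exactly $n-\dim X$, and that $T_x X_\text{reg}$ coincides with the annihilator $\{v \in \CC^n : v \cdot \nabla g_j(x)=0 \text{ for all } j\}$ of the column span of $\mathbf{M}_G(x)$ under the standard bilinear pairing. The map $\jac f^*(x)$ is, by construction, the restriction of $\jac f(x):\CC^n \to \CC^m$ to this tangent subspace, so a vector $v \in \CC^n$ lies in $\ker \jac f^*(x)$ precisely when $v$ is orthogonal to every column of $\mathbf{M}_G(x)$ and also to every column of $\jac f(x)^\text{\sf T}$---which is exactly the condition that $v$ annihilates the full column span of $\mathbf{A}(x)$.

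Consequently $\dim \ker \jac f^*(x) = n - \text{rank } \mathbf{A}(x)$, and rank-nullity applied to $\jac f^*(x) : T_x X_\text{reg} \to \CC^m$ together with $\dim T_x X_\text{reg} = \dim X$ delivers the identity. The whole argument is a short piece of linear algebra once the Jacobian criterion has been invoked at smooth points, so I do not foresee a substantive obstacle; the only mildly subtle point is to verify that $\ker \jac f^*(x)$, which a priori sits inside $T_x X_\text{reg}$, coincides with $\ker \mathbf{A}(x)^\text{\sf T} \subseteq \CC^n$---but this is automatic, since orthogonality to all the $\nabla g_j(x)$ already forces a vector into $T_x X_\text{reg}$.
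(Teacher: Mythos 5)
Your proof is correct and follows essentially the same route as the paper's: both rest on the Jacobian criterion (giving $\text{rank }\mathbf{M}_G(x) = n-\dim X$ at smooth $x$) to translate the rank of $\jac f^*(x)$ into the rank of the augmented matrix $\mathbf{A}(x)$, and then read off membership in $V_j$ from vanishing minors. Your version is slightly cleaner in that it derives the exact identity $\text{rank }\mathbf{A}(x) = (n-\dim X) + \text{rank }\jac f^*(x)$ uniformly for all $i$ via annihilators and rank--nullity, where the paper proves the $i=0$ case in detail through a chain of equivalences on column spans and disposes of general $i$ with a ``similarly.''
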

\begin{proof}
At each point $x \in X_{\rm reg}$, the left block of $\mathbf{A}(x)$ equals $\mathbf{M}_G(x)$, whose $s$ columns span the orthogonal complement of $T_xX_{\rm reg}$ in $\CC^n$. Since this complement necessarily has dimension $(n-\dim X)$ regardless of the chosen $x$, we obtain
\[
\text{rank }\textbf{A}(x) \geq n - \dim X.
\]
The right block, consisting of $\jac f(x)^\text{\sf T}$, has columns which span the coimage of $\jac f(x)$, i.e., the orthogonal complement of $\ker \jac f(x)$ in $\CC^n$. Thus, the following criteria are equivalent:
\begin{enumerate}
    \item $\text{rank }\jac f^*(x) = 0$,
    \item the coimage of $\jac f(x)$ is a subspace of the orthogonal complement $T_xX_{\rm reg}^\perp$,
    \item the columns of $\jac f(x)^\mathsf{T}$ are spanned by those of $\mathbf{M}_G(x)$,
    \item $\mathbf{A}(x)$ assumes its minimum possible rank of $n - \dim X$.
\end{enumerate}  
Similarly, $\text{rank }\jac f^*(x) = i$ holds whenever $\text{rank }\mathbf{A}(x) = (n - \dim X) + i$; this rank condition on $\mathbf{A}(x)$ corresponds to the vanishing of all minors of size $(n - \dim X) + i+1$, as claimed above.
\end{proof}

In light of the preceding result, we will work with the {\bf rank flags} of $f$ along various subvarieties of $X$; given such a subvariety $Z \subset X$, the relevant flag is defined by
\begin{align}
\B_iZ := Z \cap V_{n-\dim Z+i+1}(Z;f).
\end{align} Here $V_\bullet(Z;f)$ is given in the statement of Lemma \ref{lemma:TBX}, with $0 \leq i \leq \min(\dim Z,m)$. By construction, the Jacobian of the restriction $f^*:Z_{\rm reg} \to \CC^m$ has rank $\leq i$ at a point $z \in Z_{\rm reg}$ if and only if $z$ lies in $\B_iZ$.
 
\begin{definition}\label{def:WTBStrat}
A {\bf Whitney-Thom-Boardman} (WTB) stratification of $X$ along the map $f:\CC^n \to \CC^m$ is any pair $(X_\bullet, \bS_\bullet X)$, where $X_\bullet$ is a Whitney stratification of $X$ while $\bS_\bullet X$ is a flag on $X$ such that two conditions hold:
\begin{enumerate}
\item $X_\bullet$ is subordinate to $\bS_\bullet X$, and 
\item for each $X_\bullet$-stratum $M \subset X$, we have $\B_i\overline{M} \subset \bS_iX$ whenever $0 \leq i \leq \dim M$.
\end{enumerate}
\end{definition}

The first requirement of this definition forces $X_\bullet$ to depend on $\bS_\bullet X$ whereas the second requirement makes $\bS_\bullet X$ dependant on $X_\bullet$. We are therefore compelled to construct both $X_\bullet$ and $\bS_\bullet X$ simultaneously via the algorithm described below. It will also be necessary in the sequel to produce WTB stratifications $(X_\bullet,\bS_\bullet X)$ for which $X_\bullet$ is subordinate to an auxiliary flag $\bF_\bullet X$. Setting $\bF_iX=X$ for all $i$, we obtain a WTB stratification as defined above. To simplify the presentation, we will denote this important special case as {\bf WTBStrat}$(X,f)$. \medskip

	\begin{center}
		\begin{tabular}{|r|l|}
			\hline
			~ & {\bf WTBStratFlag}$(X,f,\bF_\bullet X)$ \\
			\hline
			~&{\bf Input:} $f: X \to \CC^m$ algebraic map with $k := \dim X$;  $F_\bullet X$ a flag on $X$.\\
			~&{\bf Output:} A pair of flags $(X_\bullet,\bS_\bullet X)$ on $X$. \\
			\hline
			1 & {\bf Set} $X_\bullet := (X_k,X_{k-1},\ldots,X_0) := (X,\varnothing,\ldots,\varnothing)$ \\
      2 & {\bf Compute} $\B_\bullet X$ using Lemma \ref{lemma:TBX} \\
   3 & {\bf Set} $\bS_\bullet X := \B_\bullet X$\\
			4 & {\bf Compute} 
   $X_{\rm sing}$ and $\mu := \dim(X_{\rm sing})$ \\
   5 & {\bf Compute} $\B_\bullet X_{\rm sing}$ using Lemma \ref{lemma:TBX} \\
			6 & {\bf Set} $X_d := X_{\rm sing}$ for all $d$ in $\set{\mu,\mu+1,\ldots,k-1}$\\
   7 & {\bf Set} $\bS_\bullet X := {\bf Merge}(\bF_\bullet X,\B_\bullet X_{\rm sing})$\\
			8 & {\bf Set} $X_\bullet := {\bf Merge} (X_\bullet,{\bf InducedFlag}(X_{\rm sing},\bS_\bullet X))$\\
			9 & {\bf For each} $d$ in $(\mu, \mu-1,\ldots,1,0)$ \\
			10 & \spc {\bf Set }$X_\bullet := {\bf Merge}(X_\bullet,{\bf DecomposeFlag}(X_d,X,\bS_\bullet X))$ \\
			11 & \spc {\bf Set }$X_\bullet := {\bf Merge}(X_\bullet,{\bf WTBStratFlag}(X_d,f,\bF_\bullet X))$\\
      12 & \spc {\bf Compute} $\B_\bullet X_{d}$ using Lemma \ref{lemma:TBX} \\
    13 &\spc {\bf Set} $\bS_\bullet X := {\bf Merge}(\bS_\bullet X,\B_\bullet X_{d})$\\
			14 & {\bf Return} $(X_\bullet,\bS_\bullet X) $ \\
   
			\hline
		\end{tabular}
	\end{center}\medskip

 \begin{theorem} Let $X\subset \CC^n$ be a pure dimensional complex algebraic variety, $f:\CC^n\to \CC^m$ a polynomial map, and $\bF_\bullet X$ a flag on $X$. Then, 
 \begin{enumerate}
     \item ${\bf WTBStratFlag}(X,f,\bF_\bullet X)$ terminates,
     \item its output $(X_\bullet,\bS_\bullet X)$ is a WTB stratification of $X$ along $f$, and
     \item the Whitney stratification $X_\bullet$ is subordinate to $\bF_\bullet X$.
 \end{enumerate}  \label{thm:Stat_wrt_Flag}
	\end{theorem}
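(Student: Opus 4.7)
The plan is to establish all three conclusions simultaneously by strong induction on $k = \dim X$, exploiting the fact that \textbf{WTBStratFlag} has the same recursive skeleton as \textbf{WhitStratFlag} from Section~\ref{subsec:flagStrat}, with an auxiliary flag $\bS_\bullet X$ maintained in tandem with $X_\bullet$. The flag $\bS_\bullet X$ is seeded with the rank flag $\B_\bullet X$ on line 3, refined by $\bF_\bullet X$ together with $\B_\bullet X_{\rm Sing}$ on line 7, refined again on line 13 by each $\B_\bullet X_d$ obtained from Lemma~\ref{lemma:TBX}, and supplied in place of $\bF_\bullet X$ to every \textbf{DecomposeFlag} call on line 10. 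The base case $k = 0$ is immediate since lines 5--13 iterate over an empty range, and the trivial output satisfies all WTB requirements vacuously.

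Termination is straightforward: the recursive call on line 11 applies \textbf{WTBStratFlag} to $X_d$ with $\dim X_d \leq \mu < k$, so strong induction finishes. To show $X_\bullet$ is a Whitney stratification of $X$ subordinate to $\bF_\bullet X$, I would transplant the correctness argument for \textbf{WhitStratFlag} from \cite[Section~5]{hn}: Theorem~\ref{thm:WhitB_Primary_Decomp} detects every failure of Condition~(B) via \textbf{DecomposeFlag} on line 10, and the \textbf{InducedFlag} subroutine inside \textbf{DecomposeFlag} forces every new stratum into a single flag piece of the current $\bS_\bullet X$. Since $\bF_\bullet X$ is absorbed into $\bS_\bullet X$ by line 7 and every subsequent merge only refines $\bS_\bullet X$, subordination of $X_\bullet$ to $\bF_\bullet X$ follows automatically from subordination to $\bS_\bullet X$.

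The main obstacle, and the genuinely new content, is verifying the Thom-Boardman containment $\B_i \overline{M} \subset \bS_i X$ for every $X_\bullet$-stratum $M$ and every $i$ with $0 \leq i \leq \dim M$. Top-dimensional strata are connected components of $X - X_{\rm Sing}$, for which $\B_i \overline{M} \subset \B_i X \subset \bS_i X$ by line 3. For a stratum $M$ of strictly smaller dimension $d$ produced during the iteration at index $d$, its closure $\overline{M}$ is a union of irreducible components of the current variety $X_d$; on the open dense subset $\overline{M}_{\rm reg} \cap X_{d,{\rm reg}}$ the ambient tangent spaces of $\overline{M}$ and $X_d$ agree, so Lemma~\ref{lemma:TBX} applied to both yields the pointwise containment $\B_i \overline{M} \subset \B_i X_d \cup (X_d)_{\rm Sing}$. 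The first summand enters $\bS_i X$ via line 13 at iteration $d$, while the second is absorbed at strictly later iterations with smaller indices. The real bookkeeping challenge is to verify that although the $\bS$-flag returned by each recursive call on line 11 is discarded, every stratum produced by the recursion closes up to a subvariety that reappears as some $X_{d'}$ subsequently processed by the outer loop; combined with the inductive hypothesis applied to $X_d$ itself, this forces the required containment for every stratum and completes the inductive step.
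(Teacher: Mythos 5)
Your overall architecture — termination by induction on dimension, reduction of Whitney correctness to \cite[Sec~4.4]{hn}, reduction of $\bF_\bullet$-subordination to $\bS_\bullet$-subordination because $\bF_\bullet X$ is absorbed into $\bS_\bullet X$ on line 7 — matches the paper's. You also correctly identify that the genuinely new content is requirement (2) of Definition~\ref{def:WTBStrat}. However, your argument there contains a real gap.

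You establish $\B_i\overline{M}\subset \B_i X_d \cup (X_d)_{\rm Sing}$ by comparing tangent spaces only on $\overline{M}_{\rm reg}\cap X_{d,{\rm reg}}$, and then need to absorb the error term $(X_d)_{\rm Sing}$ into $\bS_i X$ ``at strictly later iterations.'' This step does not close. The algorithm never adds $(X_d)_{\rm Sing}$ itself to $\bS_\bullet X$; what gets merged on line~13 at a later iteration $d'<d$ is the \emph{rank flag} $\B_\bullet X_{d'}$, not $X_{d'}$ itself. Even granting that $(X_d)_{\rm Sing}$ reappears inside some $X_{d'}$, there is no reason for it to lie inside $\B_i X_{d'}$ for small $i$ --- that is a rank-drop condition on the augmented Jacobian, and singular points of $X_d$ can have full $\jac f$ rank (e.g.\ $X_d=\bV(xy)\subset\CC^2$ with $f=\mathrm{id}$: the origin is singular in $X_d$ yet not in $\B_0 X_d$). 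So the containment $\B_i\overline{M}\subset\bS_i X$ is not secured.

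The detour through $(X_d)_{\rm Sing}$ is also unnecessary. One has $\B_i\overline{M}\subset\B_i X_d$ \emph{directly}, at every point of $\overline{M}$ and not just the regular locus, because $I_{X_d}\subset I_{\overline{M}}$: if $h=\sum_j a_j g_j$ with $g_j$ generating $I_{\overline{M}}$, then at $x\in\overline{M}$ we have $\nabla h(x)=\sum_j a_j(x)\nabla g_j(x)$, so the column span of $\mathbf{M}_{G(X_d)}(x)$ lies in that of $\mathbf{M}_{G(\overline{M})}(x)$, and hence $\mathrm{rank}\,\mathbf{A}_{\overline{M}}(x)\geq\mathrm{rank}\,\mathbf{A}_{X_d}(x)$. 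Since $\dim\overline{M}=\dim X_d$, this forces the minor-vanishing locus defining $\B_i\overline{M}$ to be contained in that defining $\B_i X_d$. This is the (implicit) argument the paper relies on when asserting $\B_j\overline{M}\subset\B_j X_i\subset\bS_j X$; it also sidesteps the discarded-recursive-flag bookkeeping that you correctly flag as worrying but do not resolve. Finally, you gloss over the paper's detailed verification of requirement (1) — that \textbf{InducedFlag} places each irreducible component $Z$ of $X_i$ into a unique piece of $\bS_\bullet X$ and that all lower intersections $Z\cap\bS_\ell X$ get pushed down into $X_{m}$ for some $m<i$ — treating it as a transplant, where the paper spells it out.
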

	\begin{proof}
		The termination of {\bf WTBStratFlag} follows from noting that the recursive call in Line 9 involves the variety $X_d$ of dimension at most $d$, with $d$ strictly smaller than $k = \dim X$. And the fact that $X_\bullet$ is a Whitney stratification of $X$ follows from the correctness of the original {\bf WhitStrat} algorithm \cite[Sec 4.4]{hn}. Moreover, since $\bF_\bullet X$ is merged into $\bS_\bullet X$ via Line 7, verifying that $X_\bullet$ is $\bF_\bullet$-subordinate can be reduced to checking that it is $\bS_\bullet$-subordinate. Therefore, it suffices to confirm that the output pair $(X_\bullet,\bS_\bullet X)$ satisfies the two requirements of Definition \ref{def:WTBStrat}. To this end, consider an $i$-dimensional stratum $M$ of $X_\bullet$.
  
        To verify requirement (1), note that $M$ may be uniquely written as $Z - X_{i-1}$, where $Z \subset X_i$ is an irreducible component --- in particular, this $Z$ will appear as a $V$ in Line 3 of  {\bf InducedFlag} when it is called with first input $X_i$. Let $j$ be the minimal index satisfying $Z \subset \bS_jX$. Then $\dim(Z \cap \bS_\ell X) < i$ for every $\ell < j$, and so $Z_\ell=Z \cap \bS_\ell X$ is merged with $X_\bullet$ in Line 5 of the {\bf InducedFlag}. Thus, $Z_\ell$ is contained in $X_m$ for some $m < i$. Since $X_m \subset X_i$, it follows that 
		\[
		M \cap (\bS_{p}X - \bS_{p-1}X) = \varnothing \text{ whenever } p<j.
		\] But since $M \subset \bS_{j}X$ and $Z$ is irreducible, we have $Z \subset \bS_{j}X$, whence $Z\subset \bS_{p}X$ for all $p\geq j$. Thus, $M$ also has empty intersections with $\bS_{p}X -\bS_{p-1}X$ for $p > j$, as desired. Turning now to property (2) of Definition \ref{def:WTBStrat}, we observe that $\bS_\bullet X$ is initialized to $\B_\bullet X$ in line 3 of {\bf WTBStrat}. Subsequently, $\B_\bullet X_d$ is merged into $\bS_\bullet X$ for each $d \leq \dim X_{\rm sing}$. Thus, for every $i$-stratum $M \subset X$, we have enforced that $\B_j \overline{M} \subset \B_j X_i \subset \bS_j X$ whenever $j \leq i$. 
	\end{proof}

Given a WTB stratification $(X_\bullet,\bS_\bullet X)$ of $X$ along $f$, we construct a {\em pushforward} flag $\bB_\bullet$ on the codomain variety $Y$ as follows. Writing $\ell$ for the length of $\bS_\bullet$, define
\begin{align} \label{eq:Bdef}
\bB_iY := \begin{cases} 
            f(\bS_iX) & i \leq \ell \\
            Y & i = \ell + 1
           \end{cases}.
\end{align} 
The map stratification algorithm described below first constructs a $\bB_\bullet$-subordinate stratification $Y'_\bullet$ of $Y$. It then constructs a WTB stratification $(X''_\bullet,\bS'_\bullet X)$ of $X$ along $f$ such that $X''_\bullet$ is subordinate to the flag $\bF_iX := f^{-1}(Y_i)$.\medskip
 
	\begin{center}
		\begin{tabular}{|r|l|}
			\hline
			~ & {\bf WhitStratMap}$(X,Y,f)$ \\
			\hline
			~&{\bf Input:} Pure dimensional varieties $X,Y$ and 
			a any morphism $f:X \to Y$.\\
			~&{\bf Output:} Lists of subvarieties $X_\bullet \subset X$ and $Y_\bullet \subset Y$. \\
			\hline
			~1 & {\bf Set} $(X''_\bullet,{\bS''_\bullet}X) := {\bf WTBStrat}(X,f)$ \\ 
			~2 & {\bf Set} ${\bB'_\bullet}Y := (\bB'_{k+2}Y,\ldots,\bB'_0Y) := (Y,\varnothing,\ldots,\varnothing)$ \\
			~3 & {\bf For each} $j$ in $(0,1,\ldots,k)$ \\
			~4 & \spc {\bf Set } $\bB'_jY := f(\bS''_jX)$\\
			~5 & {\bf Set} $\bB'_{k+1}Y := Y$ \\
			~6 & {\bf Set} $Y'_\bullet := {\bf WhitStratFlag}(Y,\bB'_\bullet)$ \\
			~7 & {\bf For each} $i$ in $(0,1,\ldots,\dim Y)$ \\
			~8 & \spc {\bf Set} $\bF_iX := f^{-1}(Y'_i)$ \\
			9 & {\bf Set} $(X'_\bullet, {\bS'_\bullet}X):=  {\bf WTBStratFlag} (X,f,\bF_\bullet)$ \\
			10 & {\bf Set} $(X_\bullet,Y_\bullet) := ${ \bf  Refine}$(X'_\bullet,Y'_\bullet,f)$\\	
			11 & {\bf Return} $(X_\bullet,Y_\bullet)$\\
			\hline
		\end{tabular}
	\end{center}\medskip
The final line invokes a {\bf Refine} subroutine, which will be described later. We first highlight a crucial property of the Whitney stratifications $(X'_\bullet,Y'_\bullet)$ which are invoked in Line 10.

\begin{proposition}\label{propn:JacobianRankRestrictionMap}
   For every $X'_\bullet$-stratum $M \subset X$ there exist a $Y'_\bullet$-stratum $N \subset Y$ satisfying $f(M) \subset N$. Moreover, at each point $x\in M, $ the Jacobian $\jac f|_M(x): T_xM \to T_{f(x)}N$ of the restricted map $f|_M$ has maximal rank: $${\rm rank}(\jac f|_M(x))=\min(\dim M,\dim N).$$ 
\end{proposition}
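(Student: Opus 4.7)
The proposition makes two assertions, and the plan is to prove them in sequence: first, that $f(M)$ lies in a single stratum $N$ of $Y'_\bullet$, which is a connectedness argument resting on the subordination of $X'_\bullet$ to the pullback flag $\bF_\bullet X := f^{-1}(Y'_\bullet)$; second, the rank equality, whose nontrivial content is a lower bound extracted from WTB condition (2) of Definition \ref{def:WTBStrat}.

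For the first assertion, I would apply Theorem \ref{thm:Stat_wrt_Flag} to the Line-9 call ${\bf WTBStratFlag}(X,f,\bF_\bullet)$ and conclude that $(X'_\bullet,\bS'_\bullet X)$ is a WTB stratification of $X$ along $f$ with $X'_\bullet$ subordinate to $\bF_\bullet X$. Consequently each stratum $M$ lies in $\bF_{j_0} X - \bF_{j_0-1} X = f^{-1}(Y'_{j_0} - Y'_{j_0-1})$ for a unique $j_0$. Applying $f$ yields $f(M) \subset Y'_{j_0} - Y'_{j_0-1}$; since $M$ is connected while the $j_0$-strata of $Y'_\bullet$ are exactly the connected components of $Y'_{j_0} - Y'_{j_0-1}$, the image $f(M)$ lies in a single $j_0$-dimensional stratum $N$, so $e := \dim N = j_0$.

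For the rank identity at a fixed $x \in M$, the upper bound $\mathrm{rank}(\jac f|_M(x)) \leq \min(d,e)$ is immediate, since $\jac f|_M(x) \colon T_xM \to T_{f(x)}N$ is a linear map between spaces of dimensions $d$ and $e$. For the matching lower bound I would argue by contradiction: suppose $r := \mathrm{rank}(\jac f|_M(x)) < \min(d,e)$ at some $x \in M$. Since $r < d = \dim M$ and $M$ lies in the smooth locus of $\overline{M}$, Lemma \ref{lemma:TBX} places $x$ in $\B_r\overline{M}$, and WTB condition (2) then forces $x \in \bS'_r X$. Combined with the subordination of $M$ to $\bS'_\bullet X$, if $M \subset \bS'_{j^*} X - \bS'_{j^*-1} X$ then necessarily $j^* \leq r$.

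The main obstacle is to finish the contradiction by establishing $j^* \geq \min(d,e)$. Since $\bF_\bullet X \subset \bS'_\bullet X$ and $M \subset \bF_{j_0} X$, the inequality $j^* \leq j_0 = e$ comes for free; the task is to rule out $j^* < \min(d,e)$. My plan is to inspect the merges building $\bS'_\bullet X$ in Lines~7 and~13 of {\bf WTBStratFlag} and to show that the excess components of $\bS'_{\min(d,e)-1} X$ beyond $\bF_{\min(d,e)-1} X$ all arise from rank flags $\B_\bullet X_{d'}$ attached to proper subvarieties $X_{d'} \subset X$ of strictly smaller dimension. Using that $M$ is produced as an irreducible piece at its correct recursive depth, and that the {\bf InducedFlag} and {\bf DecomposeFlag} subroutines would otherwise have subdivided $M$ further into a lower-dimensional stratum, I would then conclude that $M$ cannot be wholly contained in any such excess locus. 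This careful bookkeeping of the recursion --- essentially verifying that the algorithm places each stratum at its ``correct'' $\bS'_\bullet$-level --- is the technical crux of the argument.
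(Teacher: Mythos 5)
Your treatment of the first assertion and the trivial upper bound $\mathrm{rank}(\jac f|_M(x)) \leq \min(\dim M,\dim N)$ agrees with the paper. The genuine gap lies in the lower bound, and you have correctly flagged it yourself: the step ``show $j^* \geq \min(d,e)$'' is not established, and the ``careful bookkeeping of the recursion'' sketch is unlikely to close it. The flag $\bS'_\bullet X$ produced at Line~9 is the merge of $\bF_\bullet X$ with many rank flags $\B_\bullet X_{d'}$ attached to recursive pieces, so a stratum $M$ can perfectly well sit in $\bS'_{j^*}X$ for $j^* < \min(d,e)$ by being swallowed into one of those merged $\B$-loci; nothing in the recursion structure prevents this a priori. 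More to the point, your argument never relates $j^*$ to $e = \dim N$ beyond the one-sided bound $j^* \leq e$, and the quantity $\dim N$ is controlled by a piece of structure you never invoke: the pushforward flag $\bB'_\bullet Y$ of Line~4 and the fact that $Y'_\bullet$ is chosen $\bB'_\bullet$-subordinate at Line~6.

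That is precisely what the paper uses instead. It takes the index $j = j(N)$ with $N \subset \bB'_jY - \bB'_{j-1}Y$, deduces $f^{-1}(N) \subset \bS''_jX - \bS''_{j-1}X$ (note: $\bS''_\bullet$ from the Line-1 call, not $\bS'_\bullet$ from Line~9), and reads off from WTB property~(2) that $\jac f^*$ has \emph{constant} rank $\rho_j$ on the smooth locus of $\overline M$. A chain-rule diagram with the inclusion $N \hookrightarrow \CC^m$ transfers this to $\jac f|_M$, and the rank identity then falls out of two cases: if $\dim M \leq \dim N$ the formula for $\rho_j$ forces $\rho_j \geq \dim M$ directly, while if $\dim M > \dim N$ the implicit function theorem applied to the constant-rank piece $\Delta_j := \bS''_jX - \bS''_{j-1}X$ gives $\rho_j = \dim \bB'_jY = \dim N$. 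Your proposal is missing both the use of $\bB'_\bullet Y$ (which pins down $j$ from the codomain side) and the constant-rank observation (which converts a pointwise bound into an equality); without these the contradiction cannot be completed.
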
\begin{proof}
Noting that $X'_\bullet$ is subordinate to the flag $\bF_\bullet$ by Line 9  of {\bf WhitStratMap}, we know that for each stratum $M$ of $X'_\bullet$ there is a number $i := i(M)$ satisfying $M \subset (\bF_iX-\bF_{i-1}X)$. Now by Line 8, for any such stratum we have $f(M) \subset (Y'_i - Y'_{i-1})$. By the definition of a Whitney stratum we know that $M$ is connected, and so its image under the continuous map $f$ must also be connected; thus there is a unique stratum $N \subset (Y'_i - Y'_{i-1})$ of $Y'_\bullet$ satisfying  $f(M) \subset N$, as claimed above. It remains to establish that the Jacobian of $f|_M:M \to N$ has maximal rank at every point of $x$.

By Line 6 of {\bf WhitStratMap}, the stratification $Y'_\bullet$ is subordinate to the flag $\bB'_\bullet Y$; thus, there exists a number $j := j(N)$ satisfying $N \subset (\bB'_jY - \bB'_{j-1}Y)$. From Line 9, we note that $\bB'_\bullet$ is the pushforward of $\bS''_\bullet$, so we obtain 
\begin{align}\label{eq:finv}
f^{-1}(N) \subset \bS''_jX - \bS''_{j-1}X.
\end{align} 
Recall that $\bS''_jX$ contains $\B_j\overline{M}$ by the second requirement of \ref{def:WTBStrat} (and similarly for $j-1$). Therefore, the restricted map $f^*:\overline{M}_{\rm reg} \to \CC^m$ has Jacobian of constant rank $\rho_j$ given by 
\begin{align}\label{eq:rho}
\rho_j := \min(\dim M, m)-j.
\end{align} 
Since $M$ is an open subset of $\overline{M}_{\rm reg}$, for each $x \in M$ we immediately have $
\text{rank }\big(\jac f^* (x)\big) = \rho_j.$ 
Let $\iota:N \inj \CC^m$ be the inclusion map, and recall that $f|_M:M \to N$ denotes the restriction of $f$ to $M$, now viewed as a smooth map to $N$ rather than  to $\CC^m$. Thus, $f^* = \iota \circ f|_M$, and the following diagram of derivatives commutes by the chain rule:
\[
\xymatrixcolsep{1in}
\xymatrixrowsep{.5in}
\xymatrix{
T_xM \ar@{->}[r]^{\jac f|_M(x)} \ar@{->}[dr]_{\jac f^*(x)}& T_{f(x)}N \ar@{->}[d]^{\jac\iota({f(x)})}\\
& \CC^m
}
\]
Since $\iota$ is an embedding, its Jacobian is injective, so in particular the rank of $\jac f|_M(x)$ also equals $\rho_j$. Thus, we immediately have $\rho_j \leq \min(\dim M, \dim N)$ and it only remains to establish the opposite inequality. There are two cases to consider --- 
\begin{enumerate} 
\item {\bf if $\dim M \leq \dim N$}: Note that $\dim N \leq m$ holds by the fact that $N$ is a submanifold of $\CC^m$. Thus, we have $\dim M \leq \dim N \leq m$, and so \eqref{eq:rho} gives $\rho_j = \dim M-j$ for some $j \geq 0$, whence $\rho_j \geq \dim M$.

\item {\bf if $\dim M > \dim N$}: Since $\bB'_\bullet$ is the pushforward of $\bS''_\bullet$, we have $\bB_kY = f(\bS''_kX)$ for all $k$. And since $\bS''_\bullet X$ is part of a WTB stratification of $X$ along $f$ by Line 1, we know from Definition \ref{def:WTBStrat} that the Jacobian of $f$ has constant rank $\rho_k$ on (the smooth part of) the difference $\Delta_k := \bS''_kX-\bS''_{k-1}X$. By the implicit function theorem, we therefore have $\rho_k = \dim \bB_kY$ at every $k$ for which $\Delta_k \neq \varnothing$. Since the nonempty set $f^{-1}(N)$ lies in $\Delta_j$ by \eqref{eq:finv}, we therefore have $\dim \bB'_jY = \rho_j$. Finally, since $N$ is an open subset of $f(\Delta_j) = \bB'_jY - \bB'_{j-1}Y$, we obtain $\rho_j = \dim N$.
\end{enumerate}
Thus, in both cases, $\rho_j$ equals $\min(\dim M, \dim N)$, and $\jac f|_M(x)$ has maximal rank.
\end{proof}

The stratifications $X'_\bullet$ and $Y'_\bullet$ obtained after the conclusion of Line 9 of {\bf WhitStratMap} do not generally satisfy the requirements of Definition \ref{def:stratmap3}. In particular, the crucial Jacobian-surjectivity requirement fails whenever $\dim M < \dim N$. As established above, in this case the Jacobian is injective rather than surjective. To fix this defect, we consider the set of problematic strata-pairs $\cP=\cP(X'_\bullet,Y'_\bullet)$ given by:
	\[
	\cP := \set{(M,N) \mid f(M) \subset N \text{ with } \dim M < \dim N}.
	\]
The {\bf Refine} subroutine (which is invoked in Line 10 of {\bf WhitStratMap}) fixes this defect. We note that a similar subroutine also appears in \cite{hn}; the main difference between the two {\bf Refine} subroutines is that the new one calls {\bf WTBStrat} instead of {\bf WhitStrat}.
\medskip

	\begin{center}
		\begin{tabular}	{|r|l|}
			\hline
			~ & {\bf Refine}$(X'_\bullet,Y'_\bullet,f)$ \\
			\hline
			~&{\bf Input:} Stratifications $X'_\bullet,Y'_\bullet$ of pure dimensional varieties $X$ and $Y$ \\ 
			& and a morphism $f:X \to Y$. \\
			~&{\bf Output:} Lists of subvarieties $X_\bullet \subset X$ and $Y_\bullet \subset Y$. \\
			\hline
			~1 & {\bf For each } $(S,R) \in \cP(X'_\bullet,Y'_\bullet)$ with $\dim R$ maximal \\
			~2 & \spc {\bf Set} $Y^+_\bullet := Y'_\bullet$ \\
			~3 & \spc {\bf Set} $d := \dim \overline{f(S)}$ \\
			~4 & \spc {\bf Add} $\overline{f(S)}$ to $Y^+_{\geq d}$ \\
			~5 &\spc {\bf Merge} $Y^+_\bullet$ with {\bf WhitStrat}$(\Pure_d(Y'_d))$\\
			~6 & \spc {\bf For each} $\ell = (d,d-1,\ldots,1,0)$ \\
			~7 & \spc \spc {\bf For each } irreducible $W \subset \overline{Y^+_\ell - Y'_\ell}$ and $S' \in \mathfrak{S}_\ell(X'_\bullet)$\\
			~8 &\spc \spc  \spc {\bf If } $Z \cap S' \neq \varnothing$ for an irreducible $Z \subset f^{-1}(W)$   \\
			~9 &  \spc  \spc  \spc \spc {\bf Set} $r := \dim Z$ \\
			10 &\spc  \spc  \spc\spc  {\bf Add} $Z$ to $X'_{\geq r}$\\
			11 &\spc  \spc  \spc\spc {\bf Merge} $X'_\bullet$ with {\bf WTBStrat}$({\rm Pure}_{r}(X'_{r}),f)$\\
			12 & \spc {\bf Set} $Y'_\bullet=Y^+_\bullet$\\
			13 & \spc {\bf Recompute} $\cP(X'_\bullet,Y'_\bullet)$\\
			14 & {\bf Return} $(X'_\bullet,Y'_\bullet)$ \\
			\hline
		\end{tabular}
	\end{center}
\medskip

 From Theorem \ref{thm:Stat_wrt_Flag} and Proposition \ref{propn:JacobianRankRestrictionMap}, we obtain Proposition \ref{propn:refineWorks} below, which is analogous to \cite[Proposition 6.7]{hn}. The novelty here lies in the fact that the result below applies to arbitrary polynomial maps between affine complex varieties, and does not impose any genericity requirements. The proof, however, is essentially identical to that of \cite[Proposition 6.7]{hn}.
	\begin{proposition}\label{propn:refineWorks}
		The {\bf Refine} subroutine terminates, and its output $(X_\bullet,Y_\bullet)$ constitutes a valid stratification (as in Definition \ref{def:stratmap3}) of the polynomial map $f:X \to Y$.
	\end{proposition}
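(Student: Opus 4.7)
The plan is to mirror the structure of the proof of \cite[Proposition 6.7]{hn}, replacing each appeal to {\bf WhitStrat} inside {\bf Refine} by the corresponding appeal to {\bf WTBStrat} and using Proposition \ref{propn:JacobianRankRestrictionMap} in place of its generic-map analogue. I would handle the two assertions — termination of the outer loop and validity (per Definition \ref{def:stratmap3}) of the output — in sequence, and I would maintain throughout as invariants that $X'_\bullet$ is a Whitney stratification subordinate to the flag $\bF_\bullet X := f^{-1}(Y'_\bullet)$ and that the pushforward flag $\bB'_\bullet Y$ of $\bS'_\bullet X$ continues to trap $Y'_\bullet$.

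For \textbf{termination}, I would order successive states of $(X'_\bullet,Y'_\bullet)$ by the multiset $\set{\dim R \mid (S,R) \in \cP(X'_\bullet,Y'_\bullet)}$ compared in a well-founded (reverse-lex) manner. A single outer iteration selects a pair $(S,R)$ of maximal $\dim R$ and adjoins $\overline{f(S)}$ to $Y^+_\bullet$ at dimension $d := \dim \overline{f(S)} \leq \dim S < \dim R$. Consequently, the original stratum $R$ is split: the surviving $Y$-strata of dimension $\dim R$ are open subsets of $R - \overline{f(S)}$, and none of them contains $f(S)$. After the subsequent updates to $X'_\bullet$ in lines 7--11, any freshly created problematic pair has codomain dimension strictly less than $\dim R$, so the maximum of $\dim R$ over $\cP$ strictly decreases, forcing the loop to stop.

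For \textbf{correctness}, once {\bf Refine} halts we have $\cP(X_\bullet,Y_\bullet) = \varnothing$, so every stratum pair $(M,N)$ with $f(M) \subset N$ automatically satisfies $\dim M \geq \dim N$. I would verify the two clauses of Definition \ref{def:stratmap3} separately. The containment $f(M) \subset N$ persists from line 9 of {\bf WhitStratMap} through each refinement, because {\bf WTBStrat} is applied to the new irreducible components $Z \subset f^{-1}(W)$ and the flag $\bF_\bullet X$ is updated in sync with $Y'_\bullet$ via the {\bf Merge} calls. For the submersion requirement I would apply Proposition \ref{propn:JacobianRankRestrictionMap}: its hypotheses are preserved by the refinement because each recursive call to {\bf WTBStrat} re-establishes the WTB property on the new pieces while the pushforward $\bB'_\bullet Y$ still contains the image of $\bS'_\bullet X$. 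The proposition then yields $\mathrm{rank}\,(\jac f|_M(x)) = \min(\dim M,\dim N) = \dim N$ at every $x \in M$, which is precisely surjectivity of $d(f|_M)_x : T_xM \to T_{f(x)}N$. The Whitney property of the final $X_\bullet,Y_\bullet$ follows from the correctness of {\bf WhitStrat}, Theorem \ref{thm:Stat_wrt_Flag}, and the same {\bf Merge} bookkeeping already justified in \cite[\S6]{hn}.

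The main obstacle I expect is the termination bookkeeping: one has to confirm that the supplementary refinements of $X'_\bullet$ produced by lines 7--11, which call {\bf WTBStrat} on new irreducible $Z \subset f^{-1}(W)$, do not create fresh problematic pairs of codomain dimension $\geq \dim R$. This requires a careful tracking of how the updated pushforward flag $\bB'_\bullet Y$ and the updated $\bS'_\bullet X$ interact with $\cP$ at each pass. Once this monotonicity invariant is verified, the remaining arguments are a direct translation of the proof of \cite[Proposition 6.7]{hn}, since the differences between {\bf WhitStrat} and {\bf WTBStrat} affect only the internal flag structure on $X$ and not the stratum--submersion properties demanded by Definition \ref{def:stratmap3}.
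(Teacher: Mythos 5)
Your proposal matches the paper's proof exactly in strategy: the paper simply cites \cite[Proposition 6.7]{hn} and notes that the new {\bf Refine} differs only by calling {\bf WTBStrat} at Line 11, with Proposition \ref{propn:JacobianRankRestrictionMap} playing the role of the generic-map result \cite[Proposition 6.6]{hn}, which is precisely the substitution plan you lay out. Your fleshed-out termination and correctness details are consistent with that outline (though one small imprecision: after a single pass through Line 1 it is the multiset $\{\dim R\}$ that drops in the well-founded order, not necessarily its maximum, since other pairs of the same codomain dimension may persist --- but your multiset-ordering setup already handles this).
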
\begin{proof}
	    In \cite[Proposition 6.7]{hn} this result is obtained for generic mappings $f:X\to Y$, as defined in \cite[Definition 6.3]{hn}, using the construction of the {\bf Refine} subroutine along with a result (\cite[Proposition 6.6]{hn}) analogous to that of Proposition \ref{propn:JacobianRankRestrictionMap} in the case of generic maps.  The new {\bf Refine} subroutine above is identical to that in \cite[Section 6]{hn} other than the new version uses {\bf WTBStrat} at Line 11, rather than {\bf WhitStrat}. The argument is then identical to that of \cite[Proposition 6.7]{hn}, except we employ the more general result of Proposition \ref{propn:JacobianRankRestrictionMap} in place of \cite[Proposition 6.6]{hn}. 
	\end{proof}
 
\section{Whitney Stratifications from Polar Varieties}\label{sec:polarStrat}
Consider a complex affine variety $X\subset \CC^n$. In this section we present a new algebraic condition to identify (a superset of) those points in a subvariety $Y\subset X_{\rm sing}$ where Whitney's Condition (B) fails with respect to $X$. This criterion replaces Theorem \ref{thm:WhitB_Primary_Decomp} and leads to a new algorithm for computing Whitney stratifications. This new algorithm is based on {\em polar varieties} \cite{FTpolar, Piene2015}.

The polar approach has two key advantages: first, one can perform all computations in the ring $\CC[x_1, \dots, x_n]$ rather than working with the additional $n$ variables $\xi_1, \dots, \xi_n$. And second, one is no longer required to  compute any primary decompositions. Both aspects confer significant practical benefits, as is illustrated in practical tests in Section \ref{sec:performance}; the later point would also be expected to have significant implications for a worst case complexity analysis of the algorithm (though this is not carried out here). 
On the other hand, the new polar algorithm has the disadvantage of being probabilistic\footnote{In the sense that its correctness depends on a choice of random constants being truly random, as will be explained later.}.

\subsection{Polar Varieties and Condition (B)}
Let $X\subset \CC^n$ be a pure $d$-dimensional complex algebraic variety; we recall that the (Zariski open) subset of smooth points is denoted $X_\text{reg}$, and at each smooth point $p \in X_\text{reg}$ there is a well-defined $d$-dimensional tangent space $T_pX_\text{reg}$, which we treat as a linear subspace of $\CC^n$. Consider a flag $L_\bullet$ of length $d$
\[
L_\bullet = \left( L_{1} \supset \cdots \supset L_{d-1} \supset L_{d}\right),
\]
where each $L_i \subset \CC^n$ is an $(n-i)$-dimensional linear subspace.

\begin{definition}\label{def:polar}
  For each $i$ in $\set{1,\ldots,d}$, the codimension $i$ {\bf polar variety} of $X$ along the flag $L_\bullet$ is defined as the closure 
  \[
  \polar_i(X;L_\bullet) := \overline{\set{p \in X_\text{reg} \mid \dim(T_pX_\text{reg} \cap L_{i}) \geq d-i+1}}.
  \] We define $\polar_0(X;L_\bullet):=X$.
\end{definition}

The fact that each $\polar_i(X;L_\bullet)$ is an algebraic variety will be established in the next subsection, where we describe explicit generating equations. For now, let us note that if $X$ is irreducible then $\polar_i(X,L_\bullet)$ is irreducible for all $i$,  and additionally for a fixed flag of linear spaces $L_\bullet$ we obtain a corresponding {\bf polar flag}
\begin{equation}
\polar_d(X,L_\bullet)\subset \cdots \subset \polar_0(X,L_\bullet)=X, \label{eq:polarFlag}   
\end{equation}
(see e.g.~\cite[Remark 3.14]{FTpolar}).

\begin{remark}
One often finds the adjective {\em local} preceding polar varieties in the relevant literature, as for instance in \cite{FTpolar}. To align this definition with ours, one chooses a flag of generic linear spaces $F_\bullet = (F_0 \subset \cdots \subset F_d)$ with $\dim F_i = i$ through a given point $y \in X$, and then defines $L_i$ as the orthogonal complement of $F_i$ in $\CC^n$. 
\end{remark}

Our interest in polar varieties stems from a classical result of Teissier relating Condition (B) to polar multiplicities of points of $X$. In order to state this result, let us recall that the {\bf Hilbert-Samuel multiplicity} of an irreducible subvariety $V \subset X$ is the (integer) coefficient $m_VX > 0$ of $[X]$ in the Segre class $s(X,V)$ --- see \cite[Sec 4.3]{fulton2013intersection}. These multiplicities may be computed numerically using the algorithm of \cite[Theorem 5.3]{HH19} and even inferred from local point samples as described in \cite{clhsm}. Here is Teissier's polar multiplicity criterion \cite[Theorem 4.13]{FTpolar} for Condition (B). 

\begin{theorem}
  \label{thm:Teissiermultseq}
  Let $Y\subset X$ be a pure dimensional subvariety of a pure dimensional variety $X$. The following properties are equivalent: 
  \begin{enumerate} 
  \item The manifolds $(X_\text{\rm reg}, Y_\text{\rm reg})$ satisfy Whitney's Condition (B).
  \item Every point $y \in Y_\text{\rm reg}$ admits an open neighbourhood $U \subset Y_\text{\rm reg}$ such that for every $z \in U$ and generic flag $L_\bullet$, the sequence of Hilbert-Samuel multiplicities 
  \[
  m_{\bullet}(X,z) := \Big(
  m_zX, m_z\polar_1(X,L_\bullet), \dots, m_z\polar_{d-1}(X,L_\bullet)
  \Big),
  \]  
  is constant. 
  \end{enumerate}
  In particular, if we let $Y' \subset Y_\text{\rm reg}$ be the (necessarily open) set of all points which satisfy the second property, then $(X_{\rm reg},Y')$ satisfies Condition (B).
\end{theorem}

\subsection{Computing (Equations of) Polar Varieties}\label{subsec:polarEqs}

As in the previous section, let $X\subset \CC^n$ be a purely $d$-dimensional complex algebraic variety. Its polar varieties may be defined similarly to the conormal variety from \eqref{eq:conormal}, but -- crucially, from an algorithmic perspective -- one only needs to work in the coordinate ring of $\CC^n$. In particular, we define $$
\mathscr{K}_i=\begin{bmatrix}
c^{(0)}_1& \cdots & c^{(0)}_n\\
\vdots& \ddots & \vdots\\
c^{(i)}_1& \cdots & c^{(i)}_n\\
	\frac{\partial f_1}{\partial x_1} &\cdots& \frac{\partial f_1}{\partial x_n}\\
	\vdots& \ddots & \vdots \\
	\frac{\partial f_m}{\partial x_1} &\cdots& \frac{\partial f_m}{\partial x_n}\\
	\end{bmatrix} \quad \text{ and }
 \quad \mathscr{J}=\begin{bmatrix}
	\frac{\partial f_1}{\partial x_1} &\cdots& \frac{\partial f_1}{\partial x_n}\\
	\vdots& \ddots & \vdots \\
	\frac{\partial f_m}{\partial x_1} &\cdots& \frac{\partial f_m}{\partial x_n}\\
	\end{bmatrix},
$$ where the $c^{(i)}_j$ are general constants for $i \in \set{0, \dots, d-1}$. 
Let $K_i$ be the ideal generated by all $(n-d+i+1)\times (n-d+i+1)$ minors of the matrix $\mathscr{K}_i$, and let $J$ be the ideal defined by the $(n-d)\times (n-d)$ minors of the matrix $\mathscr{J}$. Then \begin{equation}
\polar_{d-i}(X,L_\bullet)=\bV_\CC((K_i+I_X):J^\infty),    \label{eq:polarVariety}
\end{equation}
where $L_i$ is the orthogonal complement of the linear space spanned by the first $i$ rows of $\mathscr{K}_i$. 

\begin{remark} The following statements pertain to practical computation of polar varieties.
\begin{enumerate}
\item The ideal saturation required in \eqref{eq:polarVariety} may be preformed via a single elimination calculation using (a probabilistic variation of) the Rabinowitsch trick. Namely, if $J=\langle g_1, \dots, g_\nu \rangle$ and we working in the ring $\CC[x_1,\dots, x_n,T]$ and consider the ideal $$
\mathcal{I}_i:=K_i+I_X+\langle 1-T \sum_{j=1}^\nu \lambda_j g_j \rangle
$$for general constants $\lambda_i\in \CC$.  Then, we have $$
(K_i+I_X):J^\infty= \CC[x_1,\dots, x_n]\cap \mathcal{I}_i. 
$$ For more details, see \cite{hauenstein2022probabilistic}.

\item In practice, the general constants are replaced by random ones and hence the computation detailed above becomes inherently probabilistic. In our setting this is no great loss, as our polar varieties already depend on (essentially random) choices of linear polynomials.
\item Alternatively, the polar variety $\polar_i(X,L_\bullet)$ may be computed by a non-probabilistic elimination in the following way: 
 \begin{itemize}
     \item Consider any matrix $D$ whose columns span $L_i$.
     \item Let $a = (a_1,\dots,a_i)^T$ be a vector of indeterminates. 
     \item The projection of $\{(x,a)\mid x\in X,\ \mathscr{J}Da = 0,\ a\neq 0 \}$ to $X$ is, by definition\footnote{Note that $Da$ for $a\neq 0$ is an element in $L_i$; now the condition $\exists a\in\CC^i$ such that $\mathscr{J}Da = 0$ is satisfied exactly by those points $x\in X$ for which the intersection $\kappa_X^{-1}(x) \cap L_i$ is nonempty.}, the polar variety $\polar_i(X,L)$. .    
     \item We compute the defining ideal of the projection above using saturation and elimination:
     $$(I_X+(\ideal{\mathscr{J}Da}:\ideal{a}^\infty)) \cap \CC[x].$$    
 \end{itemize}
 \end{enumerate}\label{remark:polarEqsV2}
\end{remark}

\subsection{Polar Varieties and Equi-multiplicity}
Our new algorithm to compute a Whitney stratification will make use of the criterion of Teissier from Theorem \ref{thm:Teissiermultseq}. This criterion in turn is phrased in terms of the constancy of the Hilbert-Samauel multiplicity of points relative to a list of varieties; the property of having constant multiplicity relative to a variety for all points in a Zariski open set is often called {\em equi-multiplicity} \cite{Equimultiplicity}. The following lemma gives a simple sufficient condition for equi-multiplicity; in spirit it follows many related results which have been summarized in Theorems 2.2.2 and Theorem 2.2.5 of the Appendix of \cite{Equimultiplicity}. The main difference is that here we use a multiplicity formula from \cite{HH19}, which lends itself to a particularly elementary proof.

\begin{lemma}\label{lemmq:equimult}
Consider varieties $Y\subset X \subset \CC^n$, with $X$ pure dimensional and $Y$ nonempty, irreducible, and different from  $\polar_i(X,L_\bullet)$ for all $i$. Let $\mu =m_YX$ be the Hilbert-Samuel multiplicity of $Y$ in $X$, and let $r$ be the codimension $\dim X - \dim Y$. Then for all points $z\in Y_{\rm reg}- \polar_r(X, L_\bullet)$, we have $m_zX=\mu$. 
\end{lemma}
\begin{proof}
First note that multiplicity is unchanged under projective closure, hence we may replace the setup in the statement of the Lemma with $Y\subset X \subset \PP^n$. By \cite[Sec 2.1.4]{HH19} we may further suppose that $Y=\bV_\CC(f_1,\ldots, f_s)$ where all the $f_i$ have the same degree. Let $\lambda_{ij}$ be an $r \times s$ matrix of generic complex numbers, and for each $i \in \set{1,\ldots,r}$ define the polynomial $Q_i = \sum_j \lambda_{ij}f_j$. Setting $W=\bV_\CC(Q_1, \dots, Q_r)$, Theorem 5.3 of \cite{HH19} gives 
\begin{align}\label{eq:degformula}\deg(Y) \mu=\deg(X)\deg(W)-\deg\left(\overline{(X\cap W)-Y}\right).
\end{align}  
Fix a point $z \in Y_\text{reg}$, and let  $A_z \subset \PP^n$ be a linear space of codimension $\dim(Y)$ which contains $z$. We now examine how \eqref{eq:degformula} changes when all varieties in sight are intersected with $A_z$. For generic choices of $A_z$, it is known (by \cite[Proposition 8.4]{fulton2013intersection} for instance) that $\deg(V \cap A_z) = \deg(V)$ holds for $V \in \set{W,X,Y}$. We remark that a sufficient condition for $A_z$ to be generic in the sense required here is for $A_z$ to be transverse to $X$ in the punctured neighborhood of $z$.  

Now, by \cite[Proposition 3.3]{helmer2023complex}, for generic $A_z$ we also have 
\[
m_zX = m_{Y \cap A_z}(X \cap A_z).
\] 
Therefore,
\begin{align}\label{eqn:ezxmu}
m_zX - \mu = \frac{\deg \left(\overline{(X \cap W) - Y}\right) -\deg\left(\overline{(X \cap W \cap A_z) - (Y \cap A_z)}\right)}{\deg Y}.
\end{align}
Next, we seek to establish that the numerator on the right side vanishes unless $z$ lies in $\polar_r(X;L_\bullet)$. To this end, let us recall that the numbers $\set{\lambda_{ij}}$ which define $W$ are generic. Therefore we may safely assume that $\dim W = n-r$, and that $\dim(X \cap W) = \dim Y$, with the latter equality holding due to transversality. Therefore -- at least when $A_z$ is generic -- both $X \cap W \cap A_z$ and $Y \cap A_z$ are finite sets of cardinality $\deg (X \cap W)$ and $\deg Y$ respectively. Returning to \eqref{eqn:ezxmu}, we conclude that $m_zX = \mu$ holds unless $A_z$ fails to intersect $X_\text{reg}$ transversely near $z$. But since $A_z$ might well equal $L_{\dim Y}$ in the flag $L_\bullet$, we conclude that $m_zX \neq \mu$ forces $z \in \polar_r(X;L_\bullet)$.
\end{proof}

We now employ the preceding result to construct a new Whitney stratification algorithm. It will be convenient to represent an $X_i$ in the flag $X_\bullet$ as a list of irreducible varieties (or equivalently, as a list of prime ideals) whose union equals $X_i$. In this way we will update intermediate values of $X_i$ by appending new components to this list.  For clarity, we will use ${\rm List}(X_i)$ to denote the list of components whose union equals $X_i$. We will also assume that a random flag $L_\bullet$ (as required for Definition \ref{def:polar}) has been chosen in advance. 

\begin{center}
  \begin{tabular}{|r|l|}
    \hline
    ~ & {\bf WhitneyPolar}$(X)$ \\
    \hline
    ~&{\bf Input:} An algebraic variety $X$.\\
    ~&{\bf Output:} $X_\bullet$, the flag in \eqref{eq:WhitneyFlag} corresponding to the Whitney stratification of $X$.\\
    \hline
    1 & {\bf Set} $d:=\dim(X)$; \\
    2 & {\bf Initialize} $X_\bullet$ to be a flag of length $d+1$ with each ${\rm List}(X_{i}) := \varnothing$\\
    3 & {\bf For} each equidimensional component $Z \subset X$, append $Z$ to ${\rm List}(X_{\dim (Z)})$\\
     
    4 & {\bf For} $i$ from $0$ to $d-1$ {\bf do}\\
    5 & \spc {\bf For} $j$ from $1$ to $d-i$ append $\polar_j(X_{d-i},L_\bullet)$ to ${\rm List}(X_{d-i-j})$\\
    6 & \spc {\bf Compute} $Y:=(\bigcup_{j=0}^{i}X_{d-j})_{\rm sing}$\\
    7 & \spc {\bf For} each equidimensional component of $Z \subset Y$, append $Z$ to ${\rm List}\big(X_{\dim (Z)}\big)$\\
    8 & \spc {\bf For} $j$ from $0$ to $i-1$ \\
    9 & \spc \spc $V := X_{d-i} \cap \polar_{i-j}(X_{d-j},L_{\bullet})$\\
    10& \spc \spc {\bf For} each equidimensional component $W \subset V$, append $W$ to ${\rm List}\big(X_{\dim (W)}\big)$ \\
    11& {\bf Return} $X_\bullet$\\
    \hline
  \end{tabular}
\end{center}\medskip
\begin{remark}
    A recent discussion of algorithms and software to perform equidimensional decomposition (along with a new algorithm for this task) can be found, for example, in \cite{eder2023direttissimo}. Note that alternatively, depending on the implementation context, one may choose to take irreducible components above everywhere (i.e.~in Lines 3, 7, and 10) rather than equidimensional ones.  
\end{remark}

\begin{remark}
The {\bf WhitneyPolar} algorithm presents the flag $X_\bullet$ as a union of parts, i.e., a list of components that have the same dimension but are not necessarily irreducible. One may wish to pursue two further goals: 
\begin{enumerate}
    \item design an algorithm to compute the actual connected components in the Whitney stratification, or
    \item produce a refinement of the Whitney stratification which ensures that each component is connected.
\end{enumerate} 
These tasks depend in practice on whether one works over the real or complex numbers. The latter can be achieved (over $\CC$) by decomposing every component in the output into irreducible components. 
\end{remark}

\begin{corollary}
    The algorithm {\bf WhitneyPolar} is correct. 
\end{corollary}
\begin{proof}
    The first three lines of the algorithm merely populate the flag $X_\bullet$ with the equidimensional components of $X$. To prove the correctness of the algorithm, it suffices to show that during the $i$-th iteration of the {\bf For} loop on Line 4, each $(d-i)$-dimensional part of the stratification, $\widetilde X \in {\rm List}(X_{d-i})$ has been correctly processed. Specifically, we show that at step $i$ the algorithm generates a (finite) list of lower-dimensional subvarieties $Y \subset \widetilde X$ such that the union of the generated varieties $Y$ contains all points $y$ which are either not smooth in $X$, or for which the following criterion holds: for every $j > d-i$ and equidimensional component $Z \in {\rm List}(X_j)$ that contains $\widetilde X$, the multiplicity sequence $m_{\bullet}({Z},y)$ differs from the expected sequence $m_{\bullet}({Z},x)$ for  generic $x \in \widetilde X$.
    
    Line 5 records all points on all polar subvarieties of $\widetilde X$; in regards to \Cref{lemmq:equimult}, this deals separately with the case where $Y$ in the Lemma is a polar subvariety (by ensuring all of these are already removed). Lines 6-7 handle singular points of $\widetilde X$. Finally, 
     Lines 8-10 excise the points that are not singular {\em and} do not lie on a polar subvariety, but have to be removed to guarantee the conclusion of \Cref{lemmq:equimult} (note that we identify a variety containing all points where \Cref{lemmq:equimult} can fail for some component of our pure dimensional variety $X_j$ all at once, rather than one irreducible component at a time).  
\end{proof}

\begin{remark} \label{rem:notminimal} The reader is warned that the stratifications produced by {\bf WhitneyPolar} may not be minimal. There are two reasons for this lack of optimality --- first, the condition (from Lemma \ref{lemmq:equimult}) which we use to ensure equi-multiplicity is sufficient but not necessary condition. And second, since our polar varieties are defined globally (rather than locally at a point), they inevitably include various random points arising from the choice of $L_\bullet$ which are not in fact necessary. We are aware of two antidotes to this --- one may either generate a suboptimal stratification and then coarsen it using the algorithm of \cite[Sec 3.3]{helmer2024new}, or alternately, one may replace each polar variety in {\bf WhitneyPolar} with the intersections of several global polar varieties (of the same codimension), each taken along different generic flags $L_\bullet$.
\end{remark}

\section{Real Stratifications }\label{sec:realvarietystrat}

Let $\RR[x_1,\ldots,x_n]$ be the ring of real polynomials in $n$ indeterminates, and fix a radical ideal $I$ of this ring. By definition, the vanishing locus $X := \bV(I)$ constitutes a real algebraic subvariety of $\RR^n$. Let $\set{f_1,f_2,\ldots,f_r}$ be any finite set of polynomials which generate $I$. Since each $f_i$ is automatically a polynomial in $\CC[x_1,\ldots,x_n]$, there is a complex variety $\bV_\CC(f_1,\ldots,f_r) \subset \CC^n$ generated by the $f_i$. Although this variety depends on $\set{f_i}$ rather than on $X$, we will denote it by $X(\CC)$ in order to emphasise that it is a complexified version of $X$. Let $X_{\rm reg}$ denote the manifold of smooth points in $X$. In this section $\dim_{\RR}(X)$ will denote the dimension of manifold $X_{\rm reg}$ and $\dim_{\CC}(X(\CC))$ will denote the dimension of the manifold $(X(\CC))_{\rm reg}$.

\subsection{Real Varieties} 
Our immediate goal here is to show that certain Whitney stratifications of $X(\CC)$ induce Whitney stratifications of $X$. We let $\iota:\RR^n \inj \CC^n$ be the embedding of real points in complex Euclidean space.

\begin{lemma}\label{lemma:SmoothCtoR}
Let $X \subset \RR^n$ be a real algebraic variety.
\begin{enumerate}
    \item The embedding $\iota$ identifies $X$ with the real points of $X(\CC)$.
    \item Assume that $\dim_\RR X$ equals $\dim_\CC X(\CC)$. If $\iota(p)$ is a smooth point of $X(\CC)$ for some $p \in X$, then $p$ is a smooth point of $X$.
\end{enumerate}
\end{lemma}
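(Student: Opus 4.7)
The proof naturally splits into two independent parts.

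For part (1), the identification is immediate from the definitions. Since the radical ideal $I$ sits inside $\RR[x_1,\ldots,x_n]\subset \CC[x_1,\ldots,x_n]$, a point $p \in \RR^n$ satisfies every polynomial in $I$ if and only if $\iota(p) \in \CC^n$ does; hence $\iota$ restricts to a bijection between $X$ and the real points $X(\CC) \cap \iota(\RR^n)$ of $X(\CC)$.

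For part (2), the plan is to use the Jacobian criterion together with both the real and complex implicit function theorems. Fix generators $f_1, \ldots, f_r$ of the radical ideal $I$. The key observation is that the Jacobian matrix of $(f_1, \ldots, f_r)$ evaluated at $p$ and at $\iota(p)$ is literally the same real matrix $A$. Writing $d' := \dim_{\iota(p)} X(\CC)$ for the local complex dimension of $X(\CC)$, the fact that $\iota(p)$ is smooth in $X(\CC)$ combined with the radicality of $I$ yields, via the complex Jacobian criterion, that $\mathrm{rank}(A) = n - d'$.

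Next, I would select $n - d'$ of the generators, say $g_1, \ldots, g_{n-d'}$, whose Jacobian rows at $p$ are linearly independent. Two applications of the implicit function theorem then give: first, by the real version, $M := \bV(g_1, \ldots, g_{n-d'})$ is locally a smooth real submanifold of $\RR^n$ of dimension $d'$ near $p$; second, by the complex version applied to the same polynomials (which have real coefficients), $M(\CC) := \bV_\CC(g_1, \ldots, g_{n-d'})$ is locally a smooth complex submanifold of complex dimension $d'$ near $\iota(p)$. Since the $g_i$ belong to $I$, we get $X(\CC) \subset M(\CC)$ in a neighborhood of $\iota(p)$; as both are smooth complex submanifolds of the same dimension $d'$ with one contained in the other, they agree in a neighborhood of $\iota(p)$. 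Passing to real points by the same reasoning as in part (1) yields $X = M$ in a neighborhood of $p$, and hence $p$ is a smooth point of $X$.

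The main obstacle is to justify that $M(\CC) = X(\CC)$ locally, rather than merely $X(\CC) \subseteq M(\CC)$; this is precisely where the smoothness of $X(\CC)$ at $\iota(p)$ and the exact matching of local dimensions are used. The hypothesis $\dim_\RR X = \dim_\CC X(\CC)$ ensures that the recovered local dimension $d'$ fits coherently with the global dimension of $X$, so the smoothness of $p$ as a real manifold point is consistent with the ambient stratification dimensions used throughout the paper.
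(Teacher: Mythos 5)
Your proof is correct, but it follows a genuinely different path from the paper's. The paper's proof of part~(2) works entirely at the level of tangent spaces: since $I$ has real generators, $X(\CC)$ is invariant under complex conjugation with $\iota(X)$ as its fixed locus, and the complex tangent space $T_{\iota(p)}X(\CC)$ inherits this conjugation invariance, hence is the complexification of a real $d$-dimensional subspace $V\subset\RR^n$ identified with $T_pX$; the equality $\dim T_pX = d = \dim_\RR X$ then gives smoothness. Your argument instead goes through the Jacobian criterion and both the real and complex implicit function theorems, explicitly selecting $n-d'$ generators with independent differentials, building the local complete intersection $M$ and its complexification $M(\CC)$, and using a dimension-and-closedness argument to force $X(\CC)=M(\CC)$ locally before descending to real points. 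Your route is longer but more constructive --- it exhibits $X$ near $p$ as a zero set of $n-d'$ real polynomials with nondegenerate Jacobian, which is essentially the Bochnak--Coste--Roy definition of a nonsingular real point --- whereas the paper's conjugation argument is shorter and avoids choosing generators. Both proofs implicitly rely on the paper's standing assumption that varieties are pure dimensional, so that the local dimension $d'$ at $\iota(p)$ equals the global $d = \dim_\CC X(\CC) = \dim_\RR X$; your closing sentence gestures at this but could state it more plainly, since without purity the conclusion would only be that $X$ is a smooth $d'$-manifold near $p$, which need not mean ``smooth point of $X$'' if $d' < \dim_\RR X$.
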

\begin{proof}
The first assertion is a tautology. Turning to the second assertion, set $d := \dim_\CC X(\CC) = \dim_\RR X$. Since the roots of real polynomials occur in complex conjugate pairs, the variety $X(\CC)$ is invariant under complex conjugation and $\iota(X)$ equals the fixed point set of this conjugation. Noting that $\iota(p)$ is a smooth point by assumption, the tangent space $T_{\iota(p)}X(\CC)$ exists and has complex dimension $d$; this tangent space also inherits invariance under complex conjugation. Thus, $T_{\iota(p)}X(\CC)$ is the complexification of a real $d$-dimensional vector space $V$ whose elements consist of all real tangent vectors at $\iota(p)$; this $V$ is evidently isomorphic to $T_pX$, as desired.
\end{proof}

Every finite descending chain $I_\bullet$ of radical ideals in $\RR[x_1,\ldots,x_n]$
\[
I_0 \rhd I_1 \rhd \cdots \rhd I_m = I
\]
produces an ascending flag $X_\bullet := \bV(I_\bullet)$ of subvarieties of $X$:
\[
X_0 \subset X_1 \subset \cdots \subset X_m = X.
\]
The next result shows that successive differences of $X_\bullet$ inherit a smooth manifold structure from the successive differences of $X_\bullet(\CC)$.

\begin{proposition}\label{prop:CtoRmfd} Let $W \subset Z$ be a pair of real algebraic varieties in $\RR^n$. If the difference $Z(\CC)-W(\CC)$ is either empty or a smooth $i$-dimensional complex manifold, then $M := (Z-W)$ is either empty or a smooth $i$-dimensional real manifold. 
\end{proposition}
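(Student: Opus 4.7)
The plan is to pick an arbitrary point $p \in M = Z-W$ and show that a neighbourhood of $p$ in $M$ carries the structure of a smooth real $i$-manifold; this will suffice because $M$ is open in $Z$ (since $W$, being a subvariety of $Z$, is Zariski-closed in $Z$). First I would verify the ``empty'' case: if $Z(\CC)-W(\CC)$ is empty but $M$ is not, then for $p \in M$ the embedded point $\iota(p)$ lies in $Z(\CC)$ but not in $W(\CC)$ (using Lemma \ref{lemma:SmoothCtoR}(1) applied to $W$ to identify $W$ with the real points of $W(\CC)$), which is a contradiction. So from here on I would assume $M$ is nonempty and fix $p \in M$; the same observation shows $\iota(p) \in Z(\CC) - W(\CC)$, so $\iota(p)$ is a smooth point of the complex $i$-manifold $Z(\CC)-W(\CC)$, and since this set is open in $Z(\CC)$, also a smooth point of $Z(\CC)$ of complex dimension $i$.

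The heart of the argument is then the Jacobian criterion, applied in parallel over $\RR$ and $\CC$. I would fix real polynomial generators $f_1,\dots,f_r$ of the radical ideal $I(Z) \subset \RR[x_1,\dots,x_n]$; these same polynomials generate the radical ideal of $Z(\CC)$ in $\CC[x_1,\dots,x_n]$. Smoothness of $Z(\CC)$ at $\iota(p)$ of complex dimension $i$ forces the complex Jacobian $\jac(f_1,\dots,f_r)(\iota(p))$ to have rank exactly $n-i$. The key observation is that because $p$ is real and the $f_j$ have real coefficients, every entry of this Jacobian is a real number, so its rank over $\CC$ coincides with its rank over $\RR$. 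Applying the real implicit function theorem to this full-rank minor then exhibits $Z$ near $p$ as a smooth real submanifold of $\RR^n$ of dimension $n-(n-i) = i$, which gives the required neighbourhood of $p$ in $M$.

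The only delicate step is the rank-matching assertion, which is really the real-analogue core of Lemma \ref{lemma:SmoothCtoR}(2): one needs that the complexified tangent space $T_{\iota(p)}Z(\CC)$ is the complexification of a real $i$-dimensional subspace, and this is exactly what is encoded by the equality of real and complex ranks of a real matrix. I expect no genuine obstacle here, since the Jacobian is literally a matrix with real entries; the subtlety is only to avoid invoking Lemma \ref{lemma:SmoothCtoR}(2) directly, since that lemma assumes the global dimension equality $\dim_\RR X = \dim_\CC X(\CC)$, which we have not postulated for $Z$. The Jacobian argument bypasses this by working entirely locally at $p$, using the hypothesis on $Z(\CC)-W(\CC)$ to supply the required complex dimension.
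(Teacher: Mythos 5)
Your strategy is genuinely different from the paper's: the paper establishes $\dim_\RR(Z-W)=i$ by passing to the irreducible component of $Z(\CC)$ through $\iota(p)$ and invoking results from real algebraic geometry (Marshall, Thm.\ 12.6.1; Harris et al., Thm.\ 2.3), then applies Lemma~\ref{lemma:SmoothCtoR}; you instead propose a self-contained Jacobian/implicit-function-theorem argument. The Jacobian rank-matching step is fine: a real matrix has the same rank over $\RR$ and $\CC$. Your handling of the empty case is also cleaner than the paper's (which phrases the dichotomy in terms of whether $\iota(Z)\subset Z(\CC)_{\rm sing}$).

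However, there is a gap precisely where you apply the real implicit function theorem. Choosing a full-rank $(n-i)\times(n-i)$ minor corresponding to polynomials $f_{j_1},\dots,f_{j_{n-i}}$, the real IFT exhibits $M':=\bV_\RR(f_{j_1},\dots,f_{j_{n-i}})$ as a smooth $i$-manifold near $p$, and $Z\subset M'$ near $p$ because $Z$ is cut out by more equations. It does \emph{not} by itself show $Z = M'$ near $p$: over $\RR$ the additional equations $f_j$ could carve out a strictly smaller set. This containment-versus-equality issue is exactly the point where real and complex algebraic geometry diverge, and it is the substance of the references the paper cites. To close the gap you should observe that, near $\iota(p)$, the complex IFT makes $\bV_\CC(f_{j_1},\dots,f_{j_{n-i}})$ a smooth $i$-dimensional complex manifold containing the $i$-dimensional complex manifold $Z(\CC)$, so the two coincide in a neighbourhood of $\iota(p)$; thus any $x\in M'$ close to $p$ has $\iota(x)\in Z(\CC)$, and being a real point of $Z(\CC)$ it lies in $Z$ by Lemma~\ref{lemma:SmoothCtoR}(1). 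This gives $M'=Z$ near $p$ and completes your argument. With that supplementary paragraph your proof is correct and is arguably more elementary and self-contained than the paper's.
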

\begin{proof}
 There are two cases to consider --- either the image $\iota(Z)$ lies entirely within the singular locus $Z(\CC)_{\rm sing}$, or there exists some $p \in M$ with $\iota(p) \in Z(\CC)_{\rm reg}$. In the first case, since $Z(\CC)-W(\CC)$ is smooth, we know that $Z(\CC)_{\rm sing}$ lies entirely within $W(\CC)$ and hence that $\iota(Z) \subset \iota(W)$; but since we have assumed $W \subset Z$, we must have $W=Z$, whence $M$ is empty. On the other hand, let $p$ be a point in $(Z-W)$ for which $\iota(p)$ is a smooth point of $Z(\CC)$. We may safely assume that the generating ideal of $Z$ is prime in $\CC[x_1,\ldots,x_n]$ by passing to the irreducible component which contains $\iota(p)$. It now follows from \cite[Theorem~12.6.1]{marshall2008positive} or \cite[Theorem~2.3]{harris2023smooth} that $(Z-W)$ has dimension $i$. Finally, Lemma \ref{lemma:SmoothCtoR} ensures that $(Z-W)$ is a smooth real $i$-manifold.   
\end{proof}

It follows from the above result that if $X_\bullet(\CC)$ is a Whitney stratification of $X(\CC)$, then the successive differences of $X_\bullet$ are either empty or smooth manifolds of the expected dimension. We show below these successive differences also satisfy Condition (B).
 
\begin{theorem}
Let $I_\bullet$ be a descending chain of radical ideals in $\RR[x_1,\ldots,x_n]$. If the flag $X_\bullet(\CC) := \bV_\CC(I_\bullet)$ constitutes a Whitney stratification of $X(\CC)$, then the corresponding flag $X_\bullet := \bV(I_\bullet)$ yields a Whitney stratification of $X$. \label{thm:RealWhitStrat}
\end{theorem}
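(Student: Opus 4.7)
The proof splits into two parts: (i) showing that the successive differences $X_i - X_{i-1}$ decompose into smooth real $i$-manifolds, and (ii) verifying Whitney's Condition~(B) for each pair of real strata. Part~(i) is immediate from Proposition~\ref{prop:CtoRmfd}: applying it with $W=X_{i-1}$ and $Z=X_i$, the hypothesis that $X_i(\CC)-X_{i-1}(\CC)$ is a smooth $i$-dimensional complex manifold (a disjoint union of complex strata of $X_\bullet(\CC)$) immediately gives that $X_i-X_{i-1}$ is a smooth $i$-dimensional real manifold, so its connected components furnish the required real strata.

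For part~(ii), I would reduce Condition~(B) for the real flag to Condition~(B) for the complex flag via the embedding $\iota:\RR^n \inj \CC^n$. Fix real strata $M\subset X_i-X_{i-1}$ and $N\subset X_j-X_{j-1}$ and sequences $\set{p_k}\subset M$, $\set{q_k}\subset N$ converging to $q\in N$ for which $T := \lim T_{p_k}M$ and $\ell := \lim [p_k,q_k]$ both exist in the real Grassmannians of $\RR^n$. Three structural observations do the work:
\begin{enumerate}
\item Since $M$ is connected and $\iota$ is continuous, $\iota(M)$ is contained in a single connected component $M^\CC$ of $X_i(\CC)-X_{i-1}(\CC)$, hence in a single complex stratum of $X_\bullet(\CC)$; likewise $\iota(N)\subset N^\CC$ for some complex stratum.
\item At each $p\in M$, the argument from Lemma~\ref{lemma:SmoothCtoR} identifies $T_{\iota(p)}M^\CC$ with the complexification $T_pM\otimes_\RR \CC$, since $M^\CC$ is invariant under complex conjugation and $\iota(p)$ is a fixed point of the conjugation action.
\item Complexification defines a continuous (in fact closed) embedding $\mathrm{Gr}_r(\RR^n)\hookrightarrow \mathrm{Gr}_r(\CC^n)$ for each $r$, sending a real $r$-plane to the complex $r$-plane it spans.
\end{enumerate}

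Combining these, the sequence of complexified tangent spaces $T_{\iota(p_k)}M^\CC = T_{p_k}M\otimes_\RR \CC$ converges in $\mathrm{Gr}_i(\CC^n)$ to $T_\CC := T\otimes_\RR\CC$, while the complex secant line $[\iota(p_k),\iota(q_k)]$ -- being the same real line viewed in $\CC^n$ -- converges to $\ell_\CC := \ell\otimes_\RR\CC$. Applying Condition~(B) for the Whitney stratification $X_\bullet(\CC)$ to the pair $(M^\CC, N^\CC)$ at $\iota(q)$ along the sequences $\iota(p_k),\iota(q_k)$ gives $\ell_\CC \subset T_\CC$. Intersecting with $\RR^n$ and using that a complex subspace of $\CC^n$ which is the complexification of a real subspace meets $\RR^n$ in precisely that real subspace, we recover $\ell\subset T$, as required.

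\textbf{Main obstacle.} The technically delicate step is observation~(2): identifying the complex tangent space $T_{\iota(p)}M^\CC$ with the complexification of $T_pM$. This requires knowing that the real and complex dimensions of the two strata match ($\dim_\RR M = \dim_\CC M^\CC = i$), which is ensured by part~(i) of the argument above, together with the conjugation-invariance argument already developed in the proof of Lemma~\ref{lemma:SmoothCtoR}. Once this identification is in place, the continuity of complexification on Grassmannians makes the transfer of limits routine, and the reduction to the complex Condition~(B) is mechanical.
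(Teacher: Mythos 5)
Your proposal is correct and follows essentially the same strategy as the paper: part~(i) is exactly the remark preceding the theorem (applying Proposition~\ref{prop:CtoRmfd} to successive differences), and part~(ii) mirrors the paper's argument of complexifying secants and tangent planes via $\iota$, transferring the real limits to the complex Grassmannian, applying Condition~(B) for $X_\bullet(\CC)$, and then recovering the real inclusion $\ell \subset T$ by taking real points and using the conjugation-invariance from Lemma~\ref{lemma:SmoothCtoR}. The only cosmetic difference is that you phrase the transfer of limits in terms of the continuous closed embedding of real into complex Grassmannians, while the paper phrases it as the complex limits being cut out by the same defining equations as the real ones; these are the same observation.
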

\begin{proof}
Consider a connected component $M \subset (X_i - X_{i-1})$, and let $V \subset X_i$ be the irreducible component which contains $M$. Similarly, let $W \subset X_i(\CC)$ be the irreducible component which contains $\iota(M)$ and define $M_{\CC} := W - X_{i-1}(\CC)$. We note that $\iota(M)$ forms an open subset of $M_\CC$, which must in turn be an $i$-stratum of $X(\CC)$. Similarly, consider a nonempty connected $N \subset (X_j - X_{j-1})$ with $i > j$ and analogously define $N_\CC \subset X_j(\CC)$. We will show that the pair $(M,N)$ satisfies Condition (B). To this end, consider a point $q \in N$ along with sequences $\set{p_k} \subset M$ and $\set{q_k} \subset N$ which converge to $q$. Letting  $\ell_k$ denote the secant line $[p_k,q_k]$ and $T_k$ the tangent plane $T_{p_k}M$, we assume further that the limits $\ell = \lim \ell_k$ and $T = \lim T_k$ both exist. 

Let $\ell_k(\CC)$ be the secant line $[\iota(p_k),\iota(q_k)]$ in $\CC^n$ and let $T_k(\CC)$ be the tangent space $T_{\iota(p_k)}M_\CC$. Since $\iota(p_k)$ and $\iota(q_k)$ are real points for all $k$, the the linear equations defining both $\ell_k(\CC)$  and $T_k(\CC)$ as varieties are exactly the same as those defining $\ell_k$ and $T_k$, respectively. Thus, the limits $\ell(\CC)$ and $T(\CC)$ both exist because the corresponding real limits exist -- one may view these as limits of real sequences inside a complex Grassmannian -- and they are defined by the same algebraic equations as their counterparts $\ell$ and $T$. By definition of secant lines, the image $\iota(w)$ of any $w \in \ell$ is a real point of $\ell(\CC)$. Since the pair $(M_\CC,N_\CC)$ satisfies Condition (B) by assumption, we know that $\ell(\CC) \subset T(\CC)$, whence $\iota(w)$ must be a real point of $T(\CC)$. Since $\iota(M)$ is an open subset of $M_\CC$, we have $T_{\iota(p_k)}M_\CC = T_{\iota(p_k)}\iota(M)$ for all $k$; and by the proof of Lemma \ref{lemma:SmoothCtoR}, the real points of $T_{\iota(p_k)}\iota(M)$ are identified with $T_{q_k}N$. Thus, $\iota(T)$ contains all the real points of $T(\CC)$, including $\iota(w)$. Since $\iota$ is injective, we have $w \in T$ as desired. \end{proof}

\begin{corollary}\label{cor:whitstratworks}
Let $X$ be a real algebraic variety. If either the {\bf WhitStrat} algorithm from Section \ref{subsec:stratificationReview} or the {\bf WhitneyPolar} algorithm from Section \ref{sec:polarStrat} are applied to $X(\CC)$, then the output produces a Whitney stratification of $X$. 
\end{corollary}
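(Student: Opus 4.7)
The strategy is to invoke Theorem \ref{thm:RealWhitStrat}: it suffices to verify that when {\bf WhitStrat} is run on $X(\CC)$, every variety $X_i(\CC)$ appearing in the output flag is defined by a radical ideal generated by polynomials in $\RR[x_1,\ldots,x_n]$. Equivalently, since the real points of an ideal in $\CC[x_1,\ldots,x_n]$ coincide with those of the ideal generated by real and imaginary parts of its generators, it is enough to show that each $X_i(\CC)$ is invariant under complex conjugation $\sigma:\CC^n\to\CC^n$. The plan is therefore to track the conjugation symmetry through every step of {\bf WhitStrat} and its subroutines {\bf Decompose}, {\bf Pure}, and {\bf Merge}.

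First I would verify directly that all the elementary algebraic operations used in the algorithm preserve $\sigma$-invariance of their inputs. The input $X(\CC)$ is $\sigma$-invariant because $X\subset\RR^n$. Passing to the singular locus via Jacobian minors preserves realness of the defining ideal, so $X(\CC)_\text{Sing}$ is $\sigma$-invariant. The conormal construction ${\bf Con}(X(\CC)) = \bV_\CC((I_X+K):J^\infty)$ uses only real polynomials when $I_X$ is real (entries of $\mathscr K$ and $\mathscr J$ are real, and both ideal sum and saturation preserve the subring $\RR[x,\xi]$), so the resulting conormal variety in $\CC^n\times\PP^{n-1}$ is invariant under the extended conjugation acting on both factors. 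Unions (via {\bf Merge}) and $\sigma$-equivariant decompositions into pure-dimensional components (via {\bf Pure}) preserve $\sigma$-invariance as well, since the purely $d$-dimensional part of a $\sigma$-invariant variety is itself $\sigma$-invariant.

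The main subtlety, and the step I expect to require the most care, is the primary decomposition inside {\bf Decompose}. A primary component $Q$ of a $\sigma$-invariant ideal $J$ over $\CC[x]$ need not itself be $\sigma$-invariant; however, $\sigma$ permutes the set of primary components of $J$, so whenever $Q$ appears, its conjugate $\bar Q$ also appears in the decomposition, and the corresponding elimination ideals satisfy $\overline{Q\cap\CC[x]}=\bar Q\cap \CC[x]$. Consequently the union $\bV_\CC(Q\cap\CC[x])\cup \bV_\CC(\bar Q\cap\CC[x])$ is $\sigma$-invariant, and since {\bf Decompose} iterates over \emph{all} primary components (adding each qualifying $\bV_\CC(K)$ to $Y_{\geq\dim\bV_\CC(K)}$), the aggregate subvarieties it contributes to the flag are $\sigma$-invariant, even though individual contributions may not be. The same argument applies to the singular locus decomposition in Line 3 of {\bf WhitStrat}.

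With these observations in hand, I would conclude by an induction on the recursion depth of {\bf WhitStrat} (which terminates by \cite{hn}): at each recursive call the input variety is $\sigma$-invariant, hence so are all intermediate varieties produced, and in particular every $X_i(\CC)$ in the output flag. Since each $X_i(\CC)$ is a $\sigma$-invariant complex subvariety of $\CC^n$, it is cut out by a radical ideal in $\RR[x_1,\ldots,x_n]$. Theorem \ref{thm:RealWhitStrat} then immediately yields a Whitney stratification of $X$ given by the real parts $X_i=\bV(I_i)$.
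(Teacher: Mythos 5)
Your argument is correct, but it takes a genuinely different (and more explicit) route than the paper's. The paper's proof is a one-line observation: {\bf WhitStrat} only ever performs ideal addition, Gr\"obner basis computation, and primary decomposition, and each of these, as actually implemented, operates within the coefficient field of the input; hence running the algorithm on $X(\CC)$ --- whose defining ideal is generated by real polynomials --- produces a flag of ideals generated by real polynomials throughout, and Theorem~\ref{thm:RealWhitStrat} applies directly. You instead interpret the algorithm as working with the genuine $\CC[x]$-primary decomposition, observe that individual primary components need not be defined over $\RR$, and recover realness by tracking invariance under complex conjugation $\sigma$: since $\sigma$ permutes the associated primes, the relevant elimination ideals come in conjugate pairs with equal dimension, and because {\bf Decompose} aggregates over \emph{all} components, the accumulated subvarieties are $\sigma$-invariant, hence defined over $\RR$.

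Both arguments are valid, and each buys something the other doesn't. The paper's argument is crisper and matches what an implementation over $\QQ$ or $\RR$ actually does, but it silently requires one to check that applying Theorem~\ref{thm:WhitB_Primary_Decomp} with $\RR$-associated primes (each of which may extend to a union of several conjugate $\CC$-primes) yields the same locus $A$ as the $\CC$-decomposition in the theorem's statement; this works because conjugate primes have equal dimension, but it is left implicit. Your argument confronts the $\CC$-decomposition head-on and makes the conjugation bookkeeping explicit, which is exactly the subtlety the one-liner glosses over. Two small points to tighten in your write-up: you should work with associated primes rather than primary components (the latter are not unique for embedded primes, though this ultimately does not matter since only radicals of elimination ideals affect the output), and you should explicitly note that the dimension test in Line~5 of {\bf Decompose} is $\sigma$-equivariant, so that $Q$ qualifies if and only if $\bar{Q}$ does --- without this the aggregate $\sigma$-invariance does not follow.
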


\begin{proof}  From an algebraic point of view our input to both the {\bf WhitStrat} and the {\bf WhitneyPolar} algorithm is a list of polynomials in the ring $\RR[x_1, \dots, x_n]$\footnote{Note, in a  realistic computational setting the input is likely to actually be a list of polynomials in $\QQ[x_1, \dots, x_n]$, and all subsequent algebraic operations will occur in this ring.} which define the  real variety $X\subset \RR^n$ and the corresponding complex variety $X(\CC)\subset \CC^n$; in the algorithms we then perform algebraic operations starting with the polynomials in this list. We need only show that the concrete algebraic operations performed in both the {\bf WhitStrat} algorithm and the {\bf WhitneyPolar} algorithm leave the coefficient field of all intermediate polynomials unchanged. To see this we list the algebraic operations performed, the conclusion then follows from an analysis of the underlying algorithms in standard references such as \cite{CLO}.

First we list the geometric operations which occur in the {\bf WhitStrat} algorithm along with their algebraic counterparts in the usual algebraic-geometric dictionary, see e.g.~\cite{CLO}. \begin{enumerate}
    \item Intersection of varieties, which corresponds to addition of polynomial ideals. 
    \item Computation of the singular locus of a variety, from Section \ref{subsec:stratificationReview} (on page 5) this corresponds to computing the radical of the polynomial ideal given by taking the appropriately sized minors of the Jacobian matrix of the defining equations. 
    \item Computing the conormal variety, from Section \ref{subsec:stratificationReview} (on page 5) this corresponds the computation of minors of a matrix of polynomials (most of which arise from partial derivatives), and the addition and saturation of polynomial ideals. 
    \item Computing the irreducible components of a variety, which corresponds to prime decomposition of ideals. 
     \item Computing the primary decomposition of ideals. 
\end{enumerate}
Addition of ideals is in effect appending polynomials to a list, which leaves the coefficient field unchanged. Polynomial differentiation and the taking of minors of polynomial matrices clearly leave the coefficient field unchanged as well. Ideal saturation as well as the computation of the radical and the prime and primary decomposition can be accomplished via Gr\"obner basis computation, see \cite{CLO} for the saturation computation and \cite{eisenbud1992direct} for the radical, prime and primary decomposition computations.  From a standard presentation of Buchberger's algorithm to compute Gr\"obner basis, e.g.~\cite[Chapter 2, \S8]{CLO}, we see this leaves the coefficient field unchanged. Hence all algebraic operations in items (1)--(5) leave the coefficient field unchanged and the conclusion follows. 

We note that the list of algebraic options above includes computing the radical of an ideal, so for the purposes of this argument it does not matter if the input list of polynomials defines a radical ideal or not. 

Similarly, the {\bf WhitneyPolar} algorithm performs items (1), (2). The additional operations are: 
\begin{enumerate}\setcounter{enumi}{5}
\item Equidimensional decomposition of a variety (or we could use irreducible decomposition in the relevant lines, in which case this is covered by (5)). 
\item The computation of polar varieties; by equation \eqref{eq:polarVariety} in Section \ref{subsec:polarEqs} this corresponds the computation of minors of a matrix of polynomials (most of which arise from partial derivatives), and the addition and saturation of polynomial ideals. 
\end{enumerate}
It can be seen from \cite{eisenbud1992direct} that equidimensional decomposition leaves the coefficient field unchanged and, as discussed above, this is also true of the polynomial differentiation, the computation of minors of matrices of polynomials, and saturation of polynomial ideals. The conclusion follows. 
\end{proof}

\subsection{Real Algebraic Morphisms}\label{section:realVarietyMapStrat}

Consider algebraic varieties $X \subset \RR^n$ and $Y \subset \RR^m$, and let $f:X\to Y$ be an algebraic morphism. We recall that this amounts to an $m$-tuple of real polynomials 
\[
\Big(f_1(x_1,\ldots,x_n), ~ f_2(x_1,\ldots,x_n), ~ \ldots, ~ f_m(x_1,\ldots,x_n)\Big)
\] 
whose evaluation at a point of $X$ yields a point of $Y$. Since each $f_i$ is automatically a complex polynomial, there is an evident morphism $f_\CC:X(\CC) \to Y(\CC)$ of complex algebraic varieties. Let $I_\bullet$ and $J_\bullet$ be descending chains of radical ideals in $\RR[x_1,\ldots,x_n]$ and $\RR[y_1,\ldots,y_m]$ respectively so that $X_\bullet(\CC) := \bV_\CC(I_\bullet)$ and $Y_\bullet(\CC) := \bV_\CC(J_\bullet)$ constitute Whitney stratifications of $X(\CC)$ and $Y(\CC)$ respectively. It follows from Theorem \ref{thm:RealWhitStrat} that $X_\bullet := \bV(I_\bullet)$ is a Whitney stratification of $X$ while $Y_\bullet := \bV(J_\bullet)$ is a Whitney stratification of $Y$. 

 \begin{theorem} \label{thm:realmapstrat}
If $f_\CC$ is stratified with respect to $X_\bullet(\CC)$ and $Y_\bullet(\CC)$, then $f$ is stratified with respect to $X_\bullet$ and $Y_\bullet$. \label{thm:realMapsStrat}
\end{theorem}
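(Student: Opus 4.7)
The plan is to verify directly the two conditions of Definition \ref{def:stratmap3} for $f:X\to Y$ with respect to the real stratifications $X_\bullet$ and $Y_\bullet$. Before doing so I would invoke Theorem \ref{thm:RealWhitStrat} twice to confirm that $X_\bullet$ and $Y_\bullet$ are themselves Whitney stratifications of $X$ and $Y$; in particular, Proposition \ref{prop:CtoRmfd} ensures that each successive difference $X_i - X_{i-1}$ (respectively $Y_j - Y_{j-1}$) is either empty or a smooth real manifold of the same numerical dimension as its complex counterpart $X_i(\CC) - X_{i-1}(\CC)$ (respectively $Y_j(\CC) - Y_{j-1}(\CC)$).

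For the stratum-to-stratum condition, I would let $M$ be an $i$-dimensional real stratum, i.e.\ a connected component of $X_i - X_{i-1}$. Since $\iota(M)$ is connected and lies in $X_i(\CC) - X_{i-1}(\CC)$, it is contained in a unique connected component $M_\CC$ of that difference, which is a complex $i$-stratum of $X_\bullet(\CC)$. By hypothesis there is a complex stratum $N_\CC \subset Y_j(\CC) - Y_{j-1}(\CC)$ with $f_\CC(M_\CC) \subset N_\CC$, so $\iota(f(M)) = f_\CC(\iota(M)) \subset N_\CC$, and hence $f(M) \subset \iota^{-1}(N_\CC) \subset Y_j - Y_{j-1}$. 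Because $f(M)$ is connected, it lies entirely within a single connected component $N$ of the smooth real manifold $Y_j - Y_{j-1}$, i.e.\ a single real stratum of $Y_\bullet$.

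The second and more delicate requirement is that $d(f|_M)_p : T_pM \to T_{f(p)}N$ be surjective at every $p \in M$. I would deduce this from the surjectivity of the complex Jacobian $d(f_\CC|_{M_\CC})_{\iota(p)} : T_{\iota(p)}M_\CC \to T_{\iota(f(p))}N_\CC$ guaranteed by stratifiability of $f_\CC$. The proof of Lemma \ref{lemma:SmoothCtoR} (whose dimension hypothesis is supplied by Proposition \ref{prop:CtoRmfd}) identifies $T_{\iota(p)}M_\CC$ with the complexification $T_pM \otimes_\RR \CC$, and similarly $T_{\iota(f(p))}N_\CC$ with $T_{f(p)}N \otimes_\RR \CC$. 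Since $f$ is defined by real polynomials evaluated at the real point $\iota(p)$, the Jacobian matrix of $f_\CC$ there has real entries, and its action is precisely the $\CC$-linear extension of the real Jacobian of $f|_M$ at $p$. The final ingredient is the elementary fact that a real matrix has the same rank over $\RR$ as over $\CC$, so complex surjectivity descends to the desired real surjectivity.

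The main obstacle will be the bookkeeping needed to match real and complex dimensions of the relevant strata: without the dimension equality supplied by Proposition \ref{prop:CtoRmfd}, the identification of $T_{\iota(p)}M_\CC$ as the complexification of $T_pM$ would fail, and the surjectivity transfer at the end of the argument would collapse. Once this numerical matching is in place, the remaining manipulations of tangent spaces and ranks are essentially formal.
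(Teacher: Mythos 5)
Your proof is correct and follows essentially the same route as the paper: lift to the complex stratification via $\iota$, transfer the stratum-to-stratum condition by connectedness, and transfer surjectivity of the restricted differential via rank-invariance of a real matrix under the field extension $\RR \hookrightarrow \CC$, with Proposition \ref{prop:CtoRmfd} supplying the equality $\dim_\CC N_\CC = \dim_\RR N$. Your write-up is if anything slightly more careful than the paper's on the tangent-space bookkeeping, explicitly identifying $T_{\iota(p)}M_\CC$ with $T_pM \otimes_\RR \CC$ rather than leaving that step implicit.
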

\begin{proof} 
Let $M \subset X$ be a nonempty connected component of the $i$-stratum $X_i-X_{i-1}$, and let $M_\CC$ be the $i$-stratum of $X_\bullet(\CC)$ which contains $\iota(M)$. By definition, the image $f_\CC(M_\CC)$ contains $f(M)$ in its locus of real points. Since $f_\CC$ is stratified with respect to $X_\bullet(\CC)$ and $Y_\bullet(\CC)$, the first requirement of Definition \ref{def:stratmap3} guarantees the existence of a single stratum $N_\CC \subset Y$ which contains $f_\CC(M_\CC)$. Thus, $f(M)$ lies in the locus of real points of $N_\CC$. Letting $N$ denote the stratum of $Y_\bullet$ corresponding to $N_\CC$, we know that the real locus of $N_\CC$ equals $N$, whence we obtain $f(M) \subset N$ and it remains to show that the restriction of $f$ to $M$ yields a submersion. Let $x$ be any point of $M$, and note that $\iota(x)$ lies in $M_\CC$. Since $f_\CC|_{M_{\CC}}$ is a submersion, its Jacobian at $\iota(x)$ is a surjective linear map from the tangent space to $M_\CC$ at $\iota(x)$ to the tangent space to $N_\CC$ at $f_\CC\circ\iota(x)$. But by construction, $f_\CC \circ \iota$ equals $f$. Thus, we have
\[
\text{rank}_\CC \left(\jac f_\CC|_{M_\CC}(\iota(x))\right) = \dim_\CC N_\CC.
\]
To conclude the argument, we note that the derivative arising on the left side of the above equality may be represented by the Jacobian matrix of $f$ at $x$, and the rank of this matrix is preserved under field extension to $\CC$. On the other hand, by Proposition \ref{prop:CtoRmfd} we know that the complex dimension of $N_\CC$ equals the real dimension of $N$. Thus, our equality becomes
\[
\text{rank}_\RR\left(\jac f|_M(x)\right) = \dim_\RR N,
\]
as desired.
\end{proof}

Assume that a morphism $f:X \to Y$ has been stratified as described in Theorem \ref{thm:realMapsStrat}. It is readily checked that the image $f(\overline{M})$ of the closure of a stratum $M \subset X$ is not an algebraic subvariety of $Y$ in general --- the best that one can expect is that $f(\overline{M})$ will be semialgebraic. It is important to note, in the context of the above theorem, that we do not obtain semialgebraic descriptions of such images.
\label{remark:semiAlgImages}

\subsection{Dominant Morphisms between Equidimensional Varieties} \label{section:DominantMapStrat}

Let $X \subset \mathbb{K}^n$ and $Y \subset \mathbb{K}^m$ be algebraic varieties defined by ideals $I_X$ and $I_Y$ over a field $\mathbb{K} \in \set{\RR,\CC}$. Let $\mathbb{K}[X]$ denote the coordinate ring $\mathbb{K}[x_1,\ldots,x_n]/I_X$ and similarly for $Y$; any morphism of varieties $f:X \to Y$ canonically induces a contravariant ring homomorphism $f^*:\mathbb{K}[Y] \to \mathbb{K}[X]$. 

\begin{definition}\label{def:dominant}
Let $f:X \to Y$ be a morphism of algebraic varieties over $\mathbb{K} \in \set{\RR,\CC}$. 
\begin{enumerate}
    \item we say that $f$ is {\bf dominant} if $f^*$ is a monomorphism (or equivalently, if the image $f(X)$ is dense in $Y$).
    \item we say that $f$ is {\bf finite} if it is dominant, and moreover, if $f^*$ gives $\mathbb{K}[X]$ the structure of an integral extension of $\mathbb{K}[Y]$.
\end{enumerate}
\end{definition}
\noindent It is a classical result, see e.g.~\cite[Chapter II, Corollary 6.7]{Mumford2015}, 
that if $f:X \to Y$ is finite in the above sense, then it is also a proper map for any field (see Remark \ref{remark:fibration} for the significance of this result in our context). Over $\mathbb{K}=\CC$, a dominant morphism is finite if and only if it is proper.

Throughout this section, $f:X\to Y$ will denote a morphism between real algebraic varieties of the same dimension $d$; we will further require that the map $f_\CC:X(\CC) \to Y(\CC)$ is dominant and that $\dim_\CC X(\CC) = \dim_\CC Y(\CC) = d$. 

\begin{definition}\label{def:jelonekset}
     The {\bf Jelonek set} of $f$ is the subset $\Jel(f) \subset Y(\CC)$ consisting of all points $y$ for which there exists a sequence $\set{x_k} \subset X(\CC)$ satisfying both 
     \[\lim_{k \to \infty} |x_k| = \infty \quad \text{ and } \quad \lim_{k\to \infty} f_\CC(x_k) = y.
     \]
 \end{definition}

\noindent It is shown in \cite{jelonek1999testing} that $\Jel(f)$ is either empty or an algebraic hypersurface of $Y(\CC)$; a Gr\"obner basis algorithm for computing the Jelonek set is given in \cite{stasica2002effective}. It follows from this algorithm that if the Jelonek set is non-empty, then it is defined by a polynomial with real coefficients. Finally, it is shown in \cite{jelonek1999testing} that $\Jel(f)$ is precisely the locus of points at which $f$ fails to be finite. Thus, if we define 
\[
V(\CC) := \Jel(f) \quad \text{and} \quad W(\CC) := \overline{f^{-1}(V)},
\]
then the restriction of $f$ forms a proper map $(X(\CC)-W(\CC)) \to (Y(\CC)-V(\CC))$ --- see \cite[Proposition~6.1]{jelonek2002geometry} for details. Note that the polynomials defining $V(\CC)$ and $W(\CC)$ are real; take $V$ to be the real zero set of the polynomials defining $V(\CC)$, and similarly let $W$ be the real zero set of the polynomials defining $W(\CC)$. It follows immediately that the restriction of $f$ to the difference $(X-W)$ constitutes a proper map to the difference $(Y-V)$.

   \begin{definition}
   The {\bf Jelonek flag} of $f:X\to Y$ is a pair $(W_\bullet,V_\bullet)$ 
   of flags, both of length $d = \dim X = \dim Y$:
   \begin{align*}
        &\emptyset =W_{-1}\subset W_0\subset \cdots \subset W_d=X \\
        & \emptyset =V_{-1}\subset V_0\subset \cdots \subset V_d=Y ,  
   \end{align*}
   defined via reverse-induction on $i \in \set{d-1,d-2,\ldots,1,0}$ as follows. Starting with $V_{d-1}$ as the real points of $\Jel(f)$, we
  \begin{enumerate}
    \item let $W_i$ be $\overline{f^{-1}(V_i)}$, and
    \item let $V_{i-1}$ be the real points in $\Jel(f|_{W_i}:W_i \to V_i)$.
  \end{enumerate}
   
   \end{definition}

\noindent By construction of the Jelonek flag, at each $i$ we have the following alternative:
\begin{enumerate}
    \item {\bf if $V_i(\CC)$ is nonempty}, then $\dim V_i(\CC) = W_i(\CC) = i$ and the restriction of $f$ forms a proper map ${(V_i-V_{i-1}) \to (W_i-W_{i-1})}$; otherwise,
    \item {\bf if $V_i(\CC)$ is empty}, then $f|_{W_i}:W_i \to V_i$ is not dominant; on the other hand, $f|_{W_{i-1}}$ is dominant, but with $\dim W_{i-1}(\CC) > \dim V_{i-1}(\CC)$.
\end{enumerate}

 \begin{theorem}\label{thm:dominantstrat} Let $(W_\bullet,V_\bullet)$ be the Jelonek flag of $f:X \to Y$.  Assume that $X_\bullet$ is a Whitney stratification of $X$ subordinate $W_\bullet$ and that $Y_\bullet$ is a Whitney stratification of $Y$ subordinate to $V_\bullet$ such that $f$ is stratified with respect to $X_\bullet$ and $Y_\bullet$ (in the sense of Definition \ref{def:stratmap3}). Whenever $\dim V_i = \dim W_i$ holds, we have that for every $Y_\bullet$-stratum $R \subset Y$ with $R\subset V_i-V_{i-1}$, the map $f|_{f^{-1}(R)}:f^{-1}(R)\to R$ is a locally trivial fiber bundle.\label{thm:properStrat}
 \end{theorem}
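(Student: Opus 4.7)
\bigskip

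\noindent\textbf{Proof proposal.} The plan is to reduce the conclusion to a direct application of Thom's first isotopy lemma (as invoked in Remark \ref{remark:fibration}) to a suitably restricted and stratified proper map. The three ingredients we need are: a proper target-restriction of $f$, a Whitney stratification of its source and target by strata of $X_\bullet$ and $Y_\bullet$, and the stratified submersion property.

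First, I would use the Jelonek flag alternative (1) recorded just above the theorem statement: under the hypothesis $\dim V_i = \dim W_i$, the restriction
\[
f_i := f|_{W_i - W_{i-1}} \colon W_i - W_{i-1} \longrightarrow V_i - V_{i-1}
\]
is well defined and proper. Next, since $X_\bullet$ is subordinate to $W_\bullet$ and $Y_\bullet$ is subordinate to $V_\bullet$, the strata of $X_\bullet$ contained in $W_i - W_{i-1}$ form a Whitney stratification of the locally closed set $W_i - W_{i-1}$, and similarly on the target side for $V_i - V_{i-1}$. Because $f$ is stratified with respect to $(X_\bullet,Y_\bullet)$ in the sense of Definition \ref{def:stratmap3}, the restricted map $f_i$ is stratified with respect to these induced Whitney stratifications: each source stratum is carried into a unique target stratum and the restriction is a smooth submersion there.

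With these pieces in place, the given stratum $R \subset V_i - V_{i-1}$ of $Y_\bullet$ is a stratum of the induced Whitney stratification of the target of $f_i$. Thom's first isotopy lemma, applied to the proper stratified submersion $f_i$, guarantees that the restriction of $f_i$ over each target stratum is a locally trivial fiber bundle. In particular, $f_i^{-1}(R) \to R$ is locally trivial. Since $R \subset V_i - V_{i-1}$, properness of $f_i$ forces every point of $f^{-1}(R)$ to lie in $W_i - W_{i-1}$ (otherwise a sequence in $f_i^{-1}(R)$ escaping to the boundary would violate properness of $f_i$ over the stratum $R$), so $f^{-1}(R) = f_i^{-1}(R)$ as subsets of $X$, and the bundle conclusion transfers verbatim.

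The main conceptual obstacle is confirming that $f^{-1}(R)$ really coincides with $f_i^{-1}(R)$ rather than just contains it; this is what lets us identify the fiber bundle given by Thom's lemma (defined on the restricted domain $W_i - W_{i-1}$) with the global preimage of $R$ under $f$. The remaining verifications are bookkeeping: checking that the stratifications of $W_i - W_{i-1}$ and $V_i - V_{i-1}$ inherited from $X_\bullet$ and $Y_\bullet$ satisfy Condition~(B) (immediate, as Whitney strata intersected with open or locally closed unions of strata remain Whitney) and that the hypotheses of Thom's first isotopy lemma are met (properness of $f_i$ plus the stratified submersion property, both of which are already in hand).
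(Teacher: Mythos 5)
Your route coincides with the paper's: restrict $f$ to a proper stratified map $W_i - W_{i-1} \to V_i - V_{i-1}$ (available by the Jelonek-flag alternative and the subordination hypotheses) and invoke Thom's first isotopy lemma. The one place your argument goes astray is the justification of $f^{-1}(R) \subset W_i - W_{i-1}$. Properness of $f_i$ cannot tell you anything about points of $f^{-1}(R)$ that lie outside $W_i - W_{i-1}$, since such points are simply not in the domain of $f_i$; a ``sequence escaping to the boundary'' is neither produced nor forbidden by properness of the restricted map, so the parenthetical reasoning is circular. The correct (and easier) justification comes straight from the flag construction: $R \subset V_i$ gives $f^{-1}(R) \subset f^{-1}(V_i) \subset W_i$, while $W_{i-1} = \overline{f^{-1}(V_{i-1})}$ together with $V_{i-1}$ being closed yields $f(W_{i-1}) \subset V_{i-1}$, so $f^{-1}(R) \cap W_{i-1} = \varnothing$ because $R \cap V_{i-1} = \varnothing$. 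With that repair the proof matches the paper's; the only other cosmetic difference is that the paper cites the semialgebraic form of the isotopy lemma (Coste--Shiota) rather than the smooth version from Remark \ref{remark:fibration}, though either suffices here.
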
\begin{proof}
     Since the stratification $Y_\bullet$ of $Y$ is subordinate to  $V_\bullet$, for each stratum $R \subset Y$ we have that $R\subset V_i-V_{i-1}$ for some $i$, and also since $X_\bullet$ is subordinate to  $W_\bullet$ we also have $f^{-1}(R)\subset W_i-W_{i-1}$. If  $\dim(V_i)=\dim(W_i)$ holds, then the map $f:(W_i-W_{i-1})\to (V_i-V_{i-1})$ is proper. An appeal to the semialgebraic version of Thom's first isotopy lemma \cite[Theorem 1]{coste1995thom} achieves the desired result.
 \end{proof}

\section{Full Semialgebraic Stratifications}\label{sec:semialgstrat}

A (basic, closed) {\em semialgebraic set} is any subset $B \subset \RR^n$ which can be expressed as an intersection of the form $B := X \cap C$ where $X$ is a real algebraic subvariety of $\RR^n$, while the set $C$, called an {\em inequality locus}, is given as follows:
\begin{align}\label{eq:ineqloc}
C := \set{x \in \RR^n \mid g_i(x) \geq 0 \text{ for }0 \leq i \leq k}.
\end{align}
Here the $g_i$'s are a finite collection of polynomials in $\RR[x_1,\ldots,x_n]$. By convention, when the number of inequalities $k$ equals zero, we have $C = \RR^n$. Thus, every algebraic variety is automatically a semialgebraic set in the above sense. The sets $X$ and $C$ are not uniquely determined for a given $B$ in general --- we may, for instance, safely remove any polynomial generator $f:\RR^n \to \RR$ from the defining ideal of $X$ while adding $f \geq 0$ and $-f \geq 0$ to the inequality locus. It is therefore customary to omit $X$ entirely and simply define $B$ as the set of points which satisfy a collection of polynomial inequalities. We find it convenient to write $B = X \cap C$ here because this allows us to isolate a relevant sub-class of semialgebraic sets.

\begin{definition}\label{def:full}
A semialgebraic set $B \subset \RR^n$ is called {\bf full} if it admits an inequality locus $C$ of the form \eqref{eq:ineqloc}, with the additional requirement that its subset
\[
C^\circ := \set{x \in \RR^n \mid g_i(x) > 0 \text{ for }0 \leq i \leq k}
\] is an $n$-dimensional smooth manifold whose closure equals $C$. 
\end{definition}

The following example highlights an important class of full semialgebraic sets which arise in the Morse theoretic study of complex analytic varieties --- see \cite[II.2]{SMTbook}.

\begin{example}
Whitney showed in \cite{whitney1965tangents} that every complex analytic variety $W \subset \CC^n$ admits a Whitney stratification $W_\bullet$ where each intermediate $W_i$ is a complex analytic subvariety. Associated to each $i$-stratum $M \subset W_i-W_{i-1}$ is a stratified space $\mathscr{L} = \mathscr{L}_M$, which is called the {\bf complex link} of $M$ and defined as follows. Consider a point $p \in M$ and let $A \subset \CC^n$ be a generic $(n-i)$-dimensional affine subspace which intersects $W$ transversely at a point $p \in M$. Let $B_\epsilon(p)$ denote the disk containing all points within a sufficiently small radius $\epsilon > 0$ around $p$ with respect to the standard euclidean distance in $\CC^n$. Finally, let $\pi:A \to \CC$ be a generic linear form sending $p$ to zero. For some positive $\delta \ll \epsilon$, one defines
\[
\mathscr{L}_M := W \cap B_\epsilon(p) \cap \set{\pi=\delta}.
\]
The stratified homeomorphism type of $\mathscr{L}_M$ depends only on the stratum $M$, and not on the auxiliary choices of $p, A, \epsilon, \pi$ and $\delta$. Complex links provide {\em normal Morse data} for critical points of stratified Morse functions $f:W \to \RR$. Although such a $\mathscr{L}_M$ will not be a complex (or even real) variety in general, it is always a full semialgebraic subset of the affine space $\set{\pi=\delta}$. In particular, its inequality locus is the disk $B_\epsilon(p) \cap \set{\pi=\delta}$.
\end{example}

Given such an inequality locus $C$ of a full semialgebraic set, we call $C^\circ$ its interior and define its boundary as the difference
\[
\partial C := C - C^\circ.
\] 
This boundary is a semialgebraic subset of the real algebraic variety \begin{equation}
    Y_C := \bV\left(\prod_1^k g_i\right).\label{eq:YCDef}
\end{equation} We adopt the usual convention that the product over the empty set equals $1$, which forces $\partial C = Y_C = \varnothing$ when $k=0$.

\begin{example}\label{ex:toysemi}
Below is a toy example of a full semialgebraic set $B = X \cap C$ inside $\RR^2$. Here $X$ is the union of red parabolas $P_1$ and $P_2$ while $C$ is the region bounded by the black lines $L_1, L_2, L_3$ and the black parabola $P_3$:

\begin{center}
\includegraphics[scale=.5]{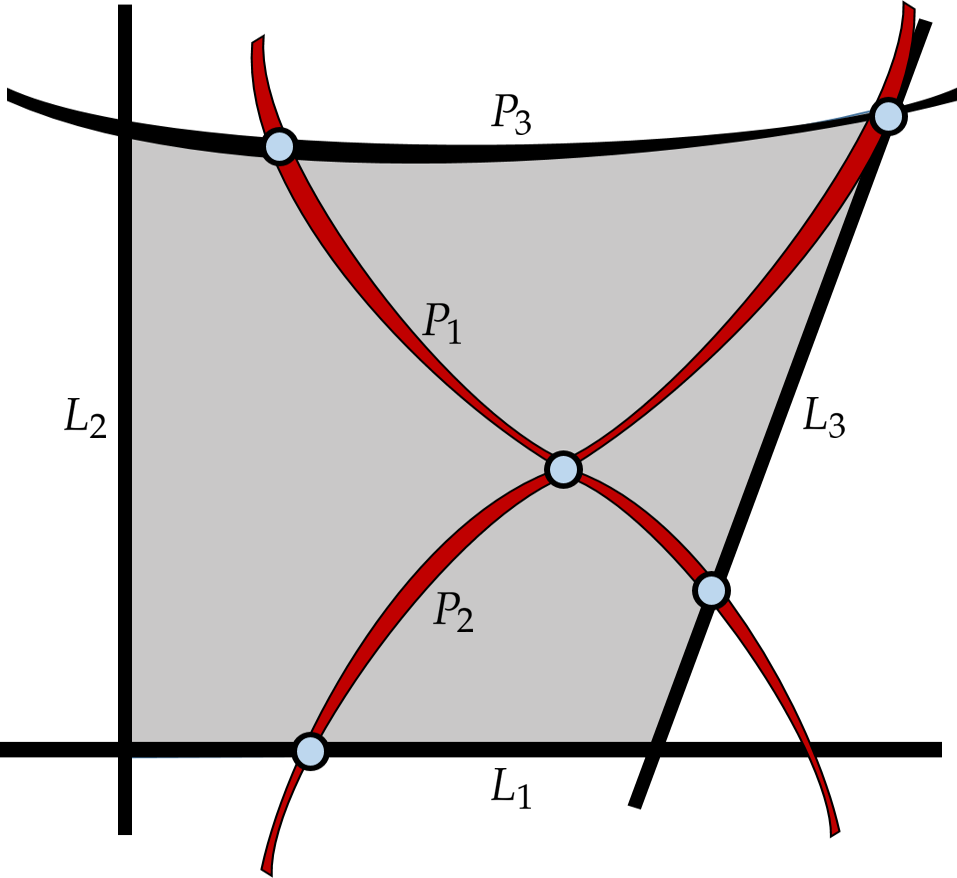}
\end{center}

\noindent The interior $C^\circ$ has been shaded gray, so $B$ consists of that part of $P_1 \cup P_2$ which lies within the closure of the gray region. The variety $Y_C$ is the union of curves $L_1,L_2,L_3$ and $P_3$. The boundary $\partial C$ consists of the four highlighted points  which lie on this union of curves.
\end{example}
 
Our next result establishes that every full semialgebraic set $B = X \cap C$ inherits a Whitney stratification from Whitney stratifications of the real algebraic varieties $X$ and $X \cap Y_C$. (We recall for the reader's convenience that flag-subordinate stratifications have been described in Section \ref{subsec:flagStrat}). 

\begin{theorem} Let $B = X \cap C$ be a full semialgebraic set, with $Y_C$ as in \eqref{eq:YCDef}, and let $X_\bullet$ be a Whitney stratification of $X$. If $Y_\bullet$ is a Whitney stratification of $X \cap Y_C$ which is subordinate to the flag $X_\bullet \cap Y_C$, then setting
\[
B_i : =(X_i\cup Y_i) \cap C
\] produces a Whitney stratification of $B$.
\label{thm:stratSemiAlg}
\end{theorem}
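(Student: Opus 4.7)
The plan is to verify the two defining features of a Whitney stratification: that each connected component of $B_j - B_{j-1}$ is a smooth manifold, and that every pair of such components satisfies Condition~(B). I begin with a clean decomposition of each successive difference. Since $C^\circ$ is disjoint from $Y_C$ (while $Y_i \subset Y_C$ for every $i$), and since $Y_C \cap C = \partial C$, a case analysis on whether a point $p \in B_j - B_{j-1}$ lies in $C^\circ$ or in $\partial C$ --- combined with the fact, forced by subordination of $Y_\bullet$ to $X_\bullet \cap Y_C$, that each $Y_\bullet$-stratum has dimension at most that of its ambient $X_\bullet$-stratum --- yields the disjoint union
\[
B_j - B_{j-1} \;=\; \bigl[(X_j - X_{j-1}) \cap C^\circ\bigr] \;\sqcup\; \bigl[(Y_j - Y_{j-1}) \cap C\bigr].
\]

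Next I check that each connected component of these two pieces is a smooth $j$-manifold. An interior component is open inside an $X_\bullet$-stratum (as $C^\circ$ is open in $\RR^n$), so smoothness is automatic. A boundary component of $(Y_j - Y_{j-1}) \cap C$ should coincide with a $Y_\bullet$-stratum $N$ lying wholly inside $C$: the key observation is that for any connected smooth stratum $N \subset Y_C = \bigcup_i \bV(g_i)$, each polynomial $g_i|_N$ either vanishes identically (when $N$ lies in $\bV(g_i)$) or has no zero on $N$, so every $g_i$ has a well-defined sign on $N$. Consequently $N \cap C$ is either all of $N$ or empty, and each boundary component of $B_j - B_{j-1}$ is a smooth $Y_\bullet$-stratum.

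For Condition~(B), I break into three cases. Two interior strata $M'_1 \subset M_1 \cap C^\circ$ and $M'_2 \subset M_2 \cap C^\circ$ inherit Condition~(B) from the $X_\bullet$-pair $(M_1,M_2)$, since tangent spaces and secant lines are unchanged upon restriction to open subsets. Two boundary strata inherit Condition~(B) directly from $Y_\bullet$. For a mixed pair --- interior $M' \subset M$ and boundary $N' \subset N$ --- subordination places $N$ inside a single $X_\bullet$-stratum $\tilde M$; when $\tilde M \neq M$ the $X_\bullet$-pair $(M,\tilde M)$ supplies the required condition after identifying $T_{p_k}M'$ with $T_{p_k}M$, and when $\tilde M = M$ both pieces sit inside a common smooth $X_\bullet$-stratum so that limiting secant lines lie in the limiting tangent space tautologically. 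The main obstacle is the smoothness claim for boundary strata: one must carefully argue that each $g_i$ has constant sign on every $Y_\bullet$-stratum, which hinges on the subordination being sufficient to separate strata by their containment in each irreducible component $\bV(g_i)$ of $Y_C$.
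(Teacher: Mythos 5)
Your approach — split $B_j-B_{j-1}$ into an interior piece living in $C^\circ$ and a boundary piece living in $\partial C$, then verify Condition~(B) in three cases (interior/interior, boundary/boundary, mixed) — is the same as the paper's, and the disjoint-union formula you give is correct once one notes that $Y_C \cap C^\circ = \varnothing$ and that subordination forces $(Y_j-Y_{j-1})\cap X_{j-1}=\varnothing$. The mixed case is also handled in essentially the same way as the paper (the paper's $N_*$ is your $\tilde M$).

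However, the ``key observation'' you isolate and rely on is false as stated. You claim that for a connected smooth stratum $N$ of $X\cap Y_C$, each $g_i|_N$ either vanishes identically or has no zero on $N$. Over $\RR$ this breaks down for polynomials with lower-dimensional real zero sets. Take $n=2$, $X=\RR^2$, $g_1=x^2+y^2$, $g_2=x$. Then $C=\{x\geq 0\}$ is full, and the real locus of $Y_C=\bV\bigl(x(x^2+y^2)\bigr)$ is exactly the line $\{x=0\}$. This line is smooth, so $Y_\bullet$ with the single stratum $N=\{x=0\}$ is a valid Whitney stratification subordinate to the (trivial) flag $X_\bullet\cap Y_C$. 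Yet $g_1|_N=y^2$ vanishes at the origin without vanishing identically. So your dichotomy fails, and the claim that ``every $g_i$ has a well-defined sign on $N$'' in the sense you intend is not established. (In this example the conclusion you ultimately need --- that $N\cap C$ is all of $N$ or empty --- happens to hold because $g_1|_N\geq 0$ throughout, but your stated reasoning does not prove it.)

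What is actually required is the weaker statement that each $g_i|_N$ is either everywhere $\geq 0$ or everywhere $<0$; equivalently, that no $Y_\bullet$-stratum crosses $\partial C$. The irreducible-component argument you sketch does not suffice to establish this, because over $\RR$ a stratum $N\subset\bV(g_{j_0})$ can contain real zeros of some $g_i$ with $\bV(g_i)$ of high real codimension (as in the example above) without $N$ lying inside $\bV(g_i)$, and your appeal to the subordination hypothesis --- which is only to the flag $X_\bullet\cap Y_C$, not to the arrangement $\bigcup_i\bV(g_i)$ --- does not control this. You flag this as ``the main obstacle,'' which is the right instinct; but the argument you then give for it is not correct. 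It is worth noting that the paper's own proof also leaves precisely this point implicit (it simply asserts that the $Y_\bullet$-strata meeting $\partial C$ partition $X\cap\partial C$), so you have identified a genuine subtlety --- just not closed it.
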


\begin{proof} Since $B$ is full, we know that the interior $C^\circ$ of its inequality locus is a smooth open $n$-dimensional submanifold of $\RR^n$. This means any open neighborhood of $X$, which is contained in $C^\circ$, inside $\RR^n$ is identical to the same neighborhood of $X$ inside $C^\circ$ and therefore, the intersections $X_i \cap C^\circ$ form a Whitney stratification $X'_\bullet$ of $X \cap C^\circ$. Alternatively the fact that $X_i \cap C^\circ$ forms a Whitney stratification $X'_\bullet$ of $X \cap C^\circ$ follows since $C^\circ$ is full dimensional, meaning the intersection $X \cap C^\circ$ is transverse and hence the conclusion follows because the Whitney stratification of two spaces which intersect transversely is given by their intersection,  \cite{cheniot1972sections}, see also \cite[Chapter 1, \S1.2, (5)]{SMTbook}. 

Let $Y'_\bullet$ be the subset of $Y_\bullet$-strata which intersect $\partial C$. Since $C$ is the disjoint union of $C^\circ$ and $\partial C$, it follows that the union of $X'_\bullet$-strata and $Y'_\bullet$-strata  partitions $B$. It remains to check that Condition (B) holds for those strata pairs $(M,N)$ of this union for which $N$ intersects the closure of $M$. There are now three cases to consider, of which the two easy ones are handled as follows:
\begin{enumerate}
\item if both $M$ and $N$ are strata of $X'_\bullet$, then Condition (B) holds because both are full-dimensional open subsets of $X_\bullet$-strata by construction, and $X_\bullet$ is assumed to be a Whitney stratification.
\item if $M$ is a $Y'_\bullet$-stratum, then the fact that $N$ intersects the closure of $M$ forces $N$ to be contained in $X \cap \partial C$, since both $X$ and $\partial C$ are closed subsets of $\RR^n$. Thus, $N$ must also be a $Y'_\bullet$-stratum in which case Condition (B) holds because $Y'_\bullet$ is Whitney.
\end{enumerate}
Turning now to the third case, assume that $M$ is an $X'_\bullet$-stratum and $N$ is a $Y'_\bullet$-stratum. By construction, $M$ must be (a connected component of) the intersection $M_* \cap C^\circ$ for some $X_\bullet$-stratum $M_*$. Since $C^\circ$ is $n$-dimensional, the tangent spaces $T_xM$ and $T_xM_*$ coincide for every $x$ in $M$. Fix a point $p \in N$, and let $N_*$ be the unique $X_\bullet$-stratum containing $p$ in its interior. Since $Y_\bullet$ is chosen subordinate to $X_\bullet\cap Y_C$, the $Y_\bullet$-strata are refinements of $X_\bullet$-strata, so $N$ must be obtained by removing some (possibly empty) set from $N^*\cap \partial C$. It follows that $N$ is a subset of $N^*$ in a small ball around $p$. Finally, $(M,N)$ must satisfy Condition (B) at $p$ because $(M_*,N_*)$ satisfy Condition (B) at $p$. 
\end{proof}

The stratifications obtained in Theorem \ref{thm:stratSemiAlg} provide a complete description of the flag $B_i$. Using the techniques of \cite{harris2023smooth, le2004computing, safey2003polar}, one can perform various fundamental algorithmic tasks involving such strata. These include testing whether the $i$-stratum $B_i - B_{i-1}$ is empty for each $i$, and sampling points from the non-empty strata.

\begin{remark}
Using Theorem \ref{thm:stratSemiAlg} in practice on a full semialgebraic set $B = X \cap C$ requires constructing a stratification of the real variety $X \cap Y_C$; this is somewhat unsatisfactory from an efficiency perspective, since $X$ and $Y_C$ may intersect far away from $B$ --- returning to Example \ref{ex:toysemi}, note that $P_1$ and $P_2$ might intersect $L_2$ at points far from $B$. We are not aware of any existing technique which can easily overcome such suboptimality.
\end{remark}

\section{Performance Analysis and Comparison}\label{sec:performance}

In this section we illustrate the  performance of the {\bf WhitneyPolar} algorithm of Section \ref{sec:polarStrat}. The results have been collected in Table \ref{tab:example}. 

\subsection{Other Methods} 

We have compared {\bf WhitneyPolar} to all known methods for computing practical Whitney stratifications in practice; all of these are based on the conormal stratification procedure reviewed in \Cref{subsec:stratificationReview}. This comparison occupies the first three columns of Table \ref{tab:example}. For reference, we also give the run time for computing a Cylindrical Algebraic Decomposition (CAD) for each given variety. Conversely, we do not include the method of \cite{dhinh2019thom} in our results because it was unable to stratify the Whitney umbrella -- which is the simplest nontrivial input -- after running for 24 hours. We also implemented the algorithm of \cite{rannou1991complexity,rannou1998complexity} for the Whitney umbrella, using the standard \texttt{QuantifierElimination} package in \texttt{Maple}. This code ran for slightly longer than 24 hours, using over 50 GB of RAM (on a workstation with an Intel Xeon W-3365 processor and 1000 GB of RAM). It ultimately crashed with a "Stack Limit Reached" error, even if we allowed unlimited stack size. 

\subsection{Sample Code}

An example of the snippet of code which is run in Macaulay2 \cite{M2} applied to example $X_1$ from Table \ref{tab:example} is given below\footnote{Note that running this code successfully requires Macaulay2 version 1.25.06 or higher and version 2.23 or higher of the \texttt{WhitneyStratifications} \cite{WhitneyStratificationsSource} package.}: 
\begin{lstlisting}
needsPackage"WhitneyStratifications"
R=QQ[x_1..x_4]
I=ideal(x_1^6+x_2^6+x_1^4*x_3*x_4+x_3^3)
elapsedTime V=whitneyStratify(I)
elapsedTime W=whitneyStratify(I,AssocPrimes=>false)
elapsedTime Vpol=whitneyStratifyPol(I,Algorithm=>"msolve")
\end{lstlisting} 
In the code snippet above the stratification \texttt{V} is computed using the method of \cite{hn}, the stratification \texttt{W} is computed using the method of \cite{helmer2024new}, and the stratification \texttt{Vpol} is computed using the {\bf WhitneyPolar} algorithm introduced above in Section \ref{sec:polarStrat}. 

\subsection{Gr\"obner Basis Computation}

For all Whitney stratification algorithms demonstrated in Table \ref{tab:example}, we have provided the run time both over $\mathbb{F}_{32749}$, a finite field of characteristic 32749, and over the rationals. In practice, efficient implementations of Gr\"obner basis algorithms over $\QQ$ often first compute over different large finite fields, and then employ rational reconstruction (this is used in \cite{msolve} for instance). A similar multi-modular philosophy could be applied to Whitney stratification computation as a whole. On larger examples, the timings in the table seem to indicate there could be a significant benefit to doing so. 

\subsection{Hardware}
All computations in the first three columns of Table \ref{tab:example} were run using Version 2.23 of the \texttt{WhitneyStratifications} \cite{WhitneyStratificationsSource} package in Macaulay2 \cite{M2} version  1.25.06. The last column used the \texttt{CylindricalDecomposition} command in \texttt{Mathematica} and was run with Wolfram 14.2.1. All computations were carried out on a laptop with an Intel Ultra 7 processor (258V) and 32 GB of RAM. The {\bf WhitneyPolar} algorithm uses the \texttt{msolve} \cite{msolve} Gr\"obner basis library (version 0.9.0) via the \texttt{Msolve} Macaulay2 package. Computations which used greater than 32GB we also run again on a desktop with 64GB of RAM available; in {\em all} cases these computations again exhausted all available RAM before the end of the 11 hour run. On all runs, {\bf WhitneyPolar} used no more that 400MB of memory.

\subsection{The Input Varieties}
Below we give the equations for the examples presented in Table \ref{tab:example}. The Whitney Cusp and Umbrella are the simplest examples of singular varieties whose Whitney stratification can not be obtained by computing iterated singular loci. 
Examples $X_1$ through $ X_4$ are taken from \cite{helmer2024new}; these are more complicated and also have non-trivial Whitney stratifications. 

\begin{equation}
X_1=\bV\left(x_1^6+x_2^6+x_1^4x_3x_4+x_3^3\right)
\subset \CC^4
\label{eq:X1Ex}
\end{equation}

\begin{equation}
X_2=\bV\left(x_{1
     }^{2}x_{3}-x_{2}^{2},\,x_{2
     }^{4}-x_{1}x_{2}^{2}-x_{3}x
     _{4}^{2},\,x_{1}^{2}x_{2}^{
     2}-x_{1}^{3}-x_{4}^{2}\right)
\subset \CC^4
\label{eq:X2Ex}
\end{equation}

\begin{equation}
X_3=\bV\left((x_1^3-2x_1^2x_4+x_1^3+x_5^2x_4+x_2^2x_1-x_2x_1x_3+x_5^3\right)\subset \CC^5
\label{eq:X3Ex}
\end{equation}

{\small
\begin{equation}
 \begin{split}
  X_4=\bV\Big(&x_1^2x_2^2x_3 - 2x_1^2x_2x_3x_4 - 2x_1^2x_2x_3x_5 + x_1^2x_3x_4^2 + 2x_1^2x_3x_4x_5 + x_1^2x_3x_5^2- 2x_1x_2^3x_3 + \\ &4x_1x_2^2x_3x_4 + 4x_1x_2^2x_3x_5 - x_1x_2^2x_4 - 2x_1x_2x_3x_4^2 - 6x_1x_2x_3x_4x_5 - 2x_1x_2x_3x_5^2 + \\ &x_1x_2x_4^2 + x_1x_2x_4x_5 + 2x_1x_3x_4^2x_5 + 2x_1x_3x_4x_5^2 - x_1x_4^2x_5 + x_2^4x_3 - 2x_2^3x_3x_4 - \\ &2x_2^3x_3x_5 + x_2^3x_4 + x_2^2x_3x_4^2 + 4x_2^2x_3x_4x_5 + x_2^2x_3x_5^2 - 2x_2^2x_4^2 - x_2^2x_4x_5 - 2x_2x_3x_4^2x_5 - \\ & 2x_2x_3x_4x_5^2 + x_2x_4^3 + 2x_2x_4^2x_5 + x_3x_4^2x_5^2 - x_4^3x_5\Big)\subset\CC^5
  \end{split}
\label{eq:X4Ex}
\end{equation}}

Examples $ X_5$ and $X_6$ are also constructed to have non-trivial Whitney stratifications.

\begin{equation}
X_5=\bV\left(x_{6}x_{7}^{2}+x_{7}^{3}+x_{6}^{2}-x_{1}x_{7}-x_{7}^{2}+x_{2},\,x_{1}^{2}x_{2}^{2}-x_{1}^{3}+x_{1}x_{7}^{2}-x_{3}x_{7}^{2}-x_{5
     }^{2}\right)
\subset \CC^7 \label{eq:X5Ex}
\end{equation}

{\small
\begin{equation}\begin{split}
X_6=\bV\Big(& 3\,x_{2}x_{3}x_{5}+2\,x_{1}x_{4}x_{5}-x_{2}x_{4}x_{5}-x_{3}x_{4}x_{5}-2\,x_{1}x_{5}x_{6}-5\,x_{3}x_{5}x_{6}+2\,x_{4}x_{5}x_{6
      },\\&x_{2}x_{4}^{2}-x_{1}x_{4}x_{5}-3\,x_{2}x_{3}x_{6}+x_{1}x_{5}x_{6}+3\,x_{3}x_{5}x_{6}-x_{4}x_{5}x_{6},\\&\,x_{2}^{2}x_{4}-x_{1}x_{2}x_{5}+2\,x_{1}x_{2
      }x_{6}-x_{2}x_{3}x_{6}-2\,x_{2}x_{4}x_{6}+x_{3}x_{5}x_{6},\\&\,3\,x_{2}^{2}x_{3}+2\,x_{1}x_{2}x_{4}-x_{2}x_{3}x_{4}-x_{1}x_{2}x_{5}-6\,x_{2}x_{3}x_{6}+x
      _{3}x_{5}x_{6}\Big)
\subset \CC^6
 \end{split}
\label{eq:X6Ex}
\end{equation}}

These examples exhibit varying levels of complexity in terms of degree of generators, number of variables, density of polynomials, etc. Our next example $X_7$ is the determinantal variety defined by the $3\times 3$ minors of the matrix \[\left[\!\begin{array}{cccc}
      \vphantom{\left\{-1\right\}}x_{1}&x_{4}&x_{7}&x_{6}\\
      \vphantom{\left\{-1\right\}}x_{2}&x_{5}&x_{5}&x_{7}\\
      \vphantom{\left\{-1\right\}}x_{3}&x_{3}&x_{4}&x_{1}
      \end{array}\!\right].\] 
 Explicitly, we have
 {\small
 \begin{equation}
 \begin{split}
X_7=\bV(&x_{1}x_{4}x_{5}-x_{3}x_{5}x_{6}+x_{4}x_{5}x_{6}-x_{4
     }^{2}x_{7}-x_{1}x_{5}x_{7}+x_{3}x_{7}^{2}, \\ &x_{1}^{2}x_{5}+x_{2}x_{4}x_{6}-x
     _{3}x_{5}x_{6}-x_{1}x_{2}x_{7}-x_{1}x_{4}x_{7}+x_{3}x_{7}^{2}, \\ &x_{2}x_{4}^{2
     }+x_{1}x_{3}x_{5}-x_{3}x_{4}x_{5}-x_{3}x_{5}x_{6}+x_{4}x_{5}x_{6}-x_{2}x_{3}x
     _{7}-x_{4}^{2}x_{7}-x_{1}x_{5}x_{7}+x_{3}x_{5}x_{7}+x_{3}x_{7}^{2},\\ &x_{1}x_{
     2}x_{4}-x_{2}x_{3}x_{6}+x_{2}x_{4}x_{6}-x_{1}x_{2}x_{7}+x_{1}x_{3}x_{7}-x_{1}
     x_{4}x_{7}-x_{3}x_{4}x_{7}+x_{3}x_{7}^{2} )
\subset \CC^7
 \end{split}
\label{eq:X7Ex}
\end{equation}}
    
      The last two varieties in Table \ref{tab:example} have Whitney stratifications given by iterated singular loci; these have been included as control examples. The first of these is smooth and has been obtained by intersecting 2 random quadrics in 10 variables. The second one, called the {\em Barth Sextic} is naturally written in 3 variables, however we require an additional variable to represent the golden ratio $\frac{\sqrt{5}+1}{2}$ algebraically. Thus, our version of the sextic is represented by an ideal of degree 12 in 4 variables. 

\begin{table}[htb]
  \centering
  \begin{tabular}{l|cc|cc|cc|||cc}        
  \toprule
 {\bf Input} & \multicolumn{2}{c|}{\bf Assoc.~Primes~\cite{hn}} & \multicolumn{2}{c|}{\bf Block Order \cite{helmer2024new}} & \multicolumn{2}{c|||}{\bf WhitneyPolar}& {\bf CAD} \\ 
 \midrule
 ~& $\mathbb{F}_{32749}$ & $\QQ$ & $\mathbb{F}_{32749}$ & $\QQ$ & $\mathbb{F}_{32749}$ & $\QQ$ \\
 \midrule 
Whitney Umbrella & 0.07s &0.07s &\bf 0.05s & \bf 0.05s& 0.06s &0.06s &0.4s \\
Whitney Cusp & 0.09s & 0.09s & 0.07s & 0.07s& \bf 0.06s &\bf 0.06s  &0.4s\\
 $X_1$, see \eqref{eq:X1Ex}& 4.8s & 2.9s  & 4.5s & 2.7s & \bf 0.15s & 2.5s  & 0.4s \\
  $X_2$, see \eqref{eq:X2Ex}&  9.5s & 58.9s  & 9.2s & 58.3s  & \bf 0.1s & 0.2s &0.4s \\
 $X_3$, see \eqref{eq:X3Ex}&  1.1s & 1.6s  & 0.6s & 0.9s  & \bf 0.2s & 0.3s  &0.9s \\
 $X_4$, see \eqref{eq:X4Ex}&   $m$ &  $m$ & 20.7s & 747.2s &\bf 1.9s & 3.4s &   $m$\\
         $X_5$, see \eqref{eq:X5Ex}&  $m$ &  $m$ &  $m$&  $m$  & \bf 1.6s & 77.2s  &  $t$ \\
      $X_6$, see \eqref{eq:X6Ex}&  $t$  &  $t$   &  $t$&  $t$ & \bf 1.4s & 3.9s  &  $m$ \\
           $X_7$, see \eqref{eq:X7Ex}&  $t$ & $t$  & $t$ & $t$  & \bf 52.5s & 208.8s  & $m$\\
  Smooth & \bf 0.08s & 0.4s  & \bf 0.08s & 0.4s & \bf 0.08s & 0.4s &  $t$ \\
    Barth Sextic& \bf 0.6s &\bf 0.6s   &\bf 0.6s &\bf 0.6s & \bf0.6s &\bf 0.6s & 12717.7s \\
    \bottomrule
  \end{tabular}

    \caption{The first three columns show run times of various practical Whitney stratification algorithms on several example varieties; the run time for Cylindrical Algebraic Decomposition (CAD) computation is also included in the last column. Computations which failed to terminate after 11 hours are denoted $t$. Computations which  exhausted 64GB of memory and terminated are denoted $m$. For each run we highlight the fastest algorithm in {\bf bold}. 
    } 
  \label{tab:example}
\end{table}

\subsection{The Results}

\normalsize
Regarding the experiments in Table \ref{tab:example}, we note that the practical benefits of the new {\bf WhitneyPolar} algorithm become particularly apparent on larger examples with denser polynomials, e.g.~$X_4$, $X_5$, $X_6$ and $X_7$. This is true both in comparison to the other Whitney stratification methods from \cite{helmer2024new,hn}, as well as when compared to alternative methods to study systems of real polynomials such as Mathematica's highly refined CAD implementation.

\bibliographystyle{abbrv}
	\bibliography{library}
\end{document}